\documentclass[11pt]{amsart}
\usepackage{amssymb}
\usepackage{amsmath}
\usepackage{tikz}
\usetikzlibrary{graphs, decorations.pathmorphing, decorations.pathreplacing}
\usepackage{hyperref, cleveref}
\usepackage{geometry}

\geometry{margin = 1in}

\hypersetup{
	pdfstartview={XYZ null null 1.00}, 
	pdfpagemode=UseNone, 
	colorlinks,
	breaklinks, 
	linkcolor=blue,
	urlcolor=blue, 
	anchorcolor=blue,
	citecolor=blue
}

\newtheorem{theorem}{Theorem}[section]
\newtheorem{question}{Question}
\newtheorem{lemma}[theorem]{Lemma}
\newtheorem{prop}[theorem]{Proposition}
\newtheorem{cor}[theorem]{Corollary}

\newtheorem{claim}[theorem]{Claim}

\theoremstyle{definition}
\newtheorem{definition}[theorem]{Definition}

\newcommand{\dom}{\textnormal{dom}}

\newcommand{\inv}{^{-1}}
\newcommand{\Z}{\mathbb{Z}}

\newcommand{\N}{\mathbb{N}}

\def\acts{\curvearrowright}

\newcommand{\res}{\upharpoonright}

\newcommand{\sch}{\textnormal{Sch}}
\newcommand{\cay}{\textnormal{Cay}}

\title{Borel Homomorphisms from Forests to Kneser Graphs}
\author{Felix Weilacher}

\begin{document}

\begin{abstract}
We answer a recent question of Cs\'oka and Vidny\'anszky \cite{csoka_vidnyanszky} and give an alternate proof of one of their results. The subject of both is which finite graphs admit factor of i.i.d. homomorphisms from the 3-regular tree. We then give yet another proof of the result in the Borel setting which leads to the following: For each $d > 2$ and $k \in \N$, there is a Borel hyperfinite $d$-regular forest $G$ and a finite graph with chromatic number $k$ $H$ so that $G$ does not admit a Borel homomorphism to $H$. All of this is tied together by a focus on the case when the target graph $H$ is a (subgraph of a) Kneser graph.
\end{abstract}

\maketitle

\section{Introduction}

This paper is about the Borel and measurable combinatorics of 3-regular forests, or more generally acyclic (regular) graphs. It is largely a response to a recent paper of Cs\'oka and Vidny\'anszky \cite{csoka_vidnyanszky}, and like them we focus on a specific family of graph combinatorial problems: finding homomorphisms to fixed finite graphs. These problems are natural and generalize the problem of finding proper colorings; for $k \in \N$ a proper $k$-coloring is the same as a homomorphism to $K_k$, the complete graph on $k$ vertices.

\begin{definition}
\
    \begin{itemize} 
        \item A \textit{Borel graph} is a graph $G$ such that $V(G)$ is a standard Borel space and $E(G) \subseteq V(G)^2$ is Borel in the product space. 
        \item Let $G$ and $H$ be graphs. A \textit{homomorphism} from $G$ to $H$ is a function $f : V(G) \to V(H)$ such that for all $(x,y) \in E(G)$, $(f(x),f(y)) \in E(H)$. 
        \item When $G$ and $H$ are Borel graphs, a homomorphism from $G$ to $H$ is called \textit{Borel} if it is Borel as a function on vertices. 
    \end{itemize}
\end{definition}

We will implicitly consider finite graphs as Borel graphs by giving the vertex set the discrete Borel structure. The following is our basic question of interest. 

\begin{question}\label{q:borel_hom}
    Which finite graphs $H$ have the following property: For any 3-regular acyclic Borel graph $G$, $G$ admits a Borel homomorphism to $H$. 
\end{question}

Though there has been interesting progress for some $H$, the question in general seems difficult. For instance, we do not even know whether the set of such $H$ is decidable, although this seems unlikely. If the degree 3 is replaced with 2, the question has an easy answer: $H$ has the property if and only if it has an odd cycle (See for example \cite{GR_paths}). For degrees larger than 3 the question is equally interesting. 

Part of the interest in Borel combinatorics of regular forests comes from its connection with fiid combinatorics on regular trees, a well-studied topic in probability. We will give a definition in Section \ref{subsec:shift}, but for now it suffices to say that for $H$ a finite graph and $d \in \N$, the $d$-regular tree, denoted $T_d$, admits an ``fiid homomorphism to $H$'' if and only if a certain fixed Borel $d$-regular forest on a standard probability space admits a measurable homomorphism to $H$. Thus, if $H$ has a positive answer for Question \ref{q:borel_hom}, it also does for the following equally important question. 

\begin{question}
    Which finite graphs $H$ admit fiid homomorphisms from $T_3$?
\end{question}

Cs\'oka and Vidny\'anszky made the following contribution to these questions.

\begin{theorem}[\cite{csoka_vidnyanszky}]\label{thm:cv_girth}
    For every $r \in \N$ there exists $C_r \in \N$ such that if a finite $r$-regular graph $H$ has girth at least $C_r$, then there is no fiid homomorphism from $T_3$ to $H$. 
\end{theorem}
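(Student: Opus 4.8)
The plan is to argue by contradiction. Suppose $H$ is a finite $r$-regular graph of girth at least $C_r$ --- the needed size of $C_r$ will emerge from the estimate below --- and that $T_3$ admits an fiid homomorphism, given by an $\operatorname{Aut}(T_3)$-invariant random colouring $X=(X_v)_v$ of $V(T_3)$ by $V(H)$ that sends edges to edges. Two reductions come first. If $H$ is bipartite we are already done: a homomorphism from $T_3$ into a bipartite graph induces a proper $2$-colouring of $T_3$ by pulling back the two sides, so if $X$ were a factor of i.i.d.\ then so would be this $2$-colouring, which is impossible --- a standard fact, provable from the exponential decay of correlations of fiid processes on $T_3$. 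So assume $H$ is non-bipartite; and, replacing $H$ by the subgraph that is the image of $X$ --- which is connected, has girth and odd girth at least those of $H$, and may be assumed still non-bipartite (else we are done as above) --- assume further that $X$ uses every edge of $H$.

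The role of the large-girth hypothesis is that it makes $X$ \emph{locally a homomorphism into a tree}. Let $p\colon\tilde H\to H$ be the universal cover, a tree of maximum degree at most $r$. For every vertex $v$ and every $k<C_r/2$ the ball $B_H(X_v,k)$ is a tree isomorphic to a ball in $\tilde H$, so $X$ restricted to $B_{T_3}(v,k)$ is a simplicial map of a finite $3$-regular tree into a finite tree. From this I extract a rigidity statement: if $u,w\in V(T_3)$ lie at distance $m<C_r$, then the reduced image in $H$ of the $u$--$w$ geodesic is an \emph{embedded} path, since otherwise it would contain a cycle of $H$ of length below the girth. Hence on all scales below $C_r$, the colour sequence of $X$ along a geodesic can fail to be injective only through literal backtracking. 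This also indicates why $C_r$ must be large rather than a small absolute constant: if $X$ collapsed onto a subgraph of $H$ of small fractional chromatic number --- e.g.\ onto a single long odd cycle, as it a priori might --- one could already contradict the fact that $T_3$ has fiid fractional chromatic number strictly above $2$ (a consequence of its fiid independence ratio being below $1/2$); but that argument provides only a fixed gap above $2$, whereas a high-girth regular $H$ is only guaranteed to have fractional chromatic number exceeding $2$ by a vanishing amount, so a quantitative input is unavoidable.

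The heart of the proof is to set ($\ast$) ``$X$ is fiid on $T_3$'' against ($\ast\ast$) the rigidity above, once $C_r$ is large. I would do this via the entropy method for fiid processes on $T_3$, in the spirit of Rahman--Vir\'ag. On one hand, an fiid process on $T_3$ obeys an entropy inequality limiting how much new information it can reveal as an exploring ball grows: roughly, its per-vertex entropy, discounted by a term reflecting the exponential growth of $T_3$, controls the conditional entropy accrued at each new level. On the other hand, ($\ast\ast$) forces $X$, below scale $C_r$, to behave like a tree homomorphism that is near-injective apart from a controlled density of ``folds'' --- vertices where two of the three neighbouring colours coincide; a mass-transport computation ties this fold density to the rate at which $X$ ``expands'' into $\tilde H$, and if folds are rare then $X$ reveals on the order of one $H$-step of fresh information per new vertex, which overshoots the fiid entropy budget once $C_r$ is large relative to $r$. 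Balancing the two bounds against the fold density is where the real content --- and the value of $C_r$ --- comes from, and I expect this quantitative incompatibility to be the main obstacle.

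It is here that the paper's Kneser-graph viewpoint serves as an organising device. Composing $X$ with an embedding $H\hookrightarrow K(n,k)$ --- every finite graph embeds into a Kneser graph by handing each vertex a private block of colours --- presents $X$ as an fiid family of independent sets $A_i=\{v:i\in X_v\}$ of $T_3$ whose pattern of contacts at distance one is exactly the adjacency of $H$. The large-girth hypothesis becomes the combinatorial assertion that distinct $A_i$ can meet at small $T_3$-distance only along short paths of $H$, and the entropy estimate can be phrased and carried out entirely in terms of these independent sets --- which is also the common setting for the companion results on Kneser targets and for the Borel counterexamples.
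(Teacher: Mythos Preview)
The paper under review does not prove this theorem. Theorem~\ref{thm:cv_girth} is quoted from \cite{csoka_vidnyanszky} as background, with no argument reproduced here; the only thing the paper supplies in its vicinity is the one-line derivation of Corollary~\ref{cor:cv_chromatic} from it via Bollob\'as's high-girth, high-chromatic-number graphs. Later the paper gives an \emph{alternative} route to Corollary~\ref{cor:cv_chromatic} through Kneser graphs and the bound on the fiid independence ratio of $T_3$, but that argument bypasses Theorem~\ref{thm:cv_girth} entirely rather than reproving it. So there is no proof in this paper against which to compare your proposal.

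As for the proposal on its own terms: it is an outline, not a proof. The reductions in your first paragraph are fine, and the observation that large girth makes the homomorphism look locally like a map into a tree is correct and relevant. But the core step --- the ``entropy versus fold-density'' trade-off in your third paragraph --- is never carried out. You assert that if folds are rare then $X$ reveals too much fresh information per vertex to be fiid, and gesture at a mass-transport computation tying fold density to expansion in $\tilde H$, but you neither state a precise entropy inequality nor explain what contradiction actually arises in the high-fold-density regime; indeed you concede that ``balancing the two bounds \ldots\ is where the real content comes from'' and that you ``expect this \ldots\ to be the main obstacle''. That is the gap: the quantitative work that produces $C_r$ is simply absent. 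The Kneser-graph reformulation in your last paragraph does not close it --- every finite graph embeds in some $K(n,k)$, so the embedding by itself extracts no bound, and the claim that the entropy estimate ``can be phrased and carried out entirely in terms of these independent sets'' is an assertion rather than an argument. Your overall strategy is plausible and in the spirit of the original Cs\'oka--Vidny\'anszky proof, but what you have written does not yet establish the theorem.
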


\begin{cor}[\cite{csoka_vidnyanszky}]\label{cor:cv_chromatic}
    There are finite graphs $H$ of arbitrarily large chromatic number which do not admits fiid homomorphisms from $T_3$. 
\end{cor}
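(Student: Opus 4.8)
The plan is to deduce Corollary~\ref{cor:cv_chromatic} from Theorem~\ref{thm:cv_girth} together with the classical fact that there exist regular graphs of simultaneously large girth and large chromatic number. The reduction is short. It suffices to prove the purely combinatorial statement: there is a function $\phi$ with $\phi(r)\to\infty$ as $r\to\infty$ such that for every $r\geq 3$ and every $g\in\N$ there is a finite $r$-regular graph of girth exceeding $g$ and chromatic number at least $\phi(r)$. Granting this, fix $k\in\N$, choose $r$ so large that $\phi(r)\geq k$, let $C_r$ be the constant provided by Theorem~\ref{thm:cv_girth}, and apply the combinatorial statement with $g=C_r$ to obtain a finite $r$-regular graph $H$ with girth exceeding $C_r$ and $\chi(H)\geq k$. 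By Theorem~\ref{thm:cv_girth}, $T_3$ admits no fiid homomorphism to $H$. Since $k$ was arbitrary, this is exactly the assertion of the corollary.

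For the combinatorial statement I would use random regular graphs. Let $G_{n,r}$ be a uniformly random simple $r$-regular graph on $n$ vertices (with $r\geq 3$ fixed and $nr$ even), and invoke two standard facts about this model, both obtained from the configuration model. First, as $n\to\infty$ the number of cycles in $G_{n,r}$ of length at most $g$ converges in distribution to a Poisson random variable with a finite mean $\lambda(r,g)$; in particular $\Pr[\,G_{n,r}\text{ has girth exceeding }g\,]$ stays bounded below by the positive constant $e^{-\lambda(r,g)}$. Second, $\chi(G_{n,r})\geq \phi(r)$ asymptotically almost surely, where $\phi(r)=\lfloor r/(3\log r)\rfloor\to\infty$: indeed the first moment method yields $\alpha(G_{n,r})\leq 3n(\log r)/r$ a.a.s. for all large $r$, and $\chi\geq n/\alpha$ (one even gets the sharp $\chi(G_{n,r})\sim r/(2\log r)$ by Frieze--{\L}uczak). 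Since the second event has probability tending to $1$ and the first has probability bounded away from $0$, for all sufficiently large $n$ their intersection is nonempty; any graph in it witnesses the combinatorial statement.

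The only delicate point is the bookkeeping in the previous paragraph: one needs to know that ``girth exceeding $g$'' is a positive-probability event so that the asymptotically-almost-sure chromatic lower bound survives conditioning on it, and this is exactly what the Poisson limit for the short cycle count supplies; the descriptive set theory is entirely absorbed into Theorem~\ref{thm:cv_girth}. If an explicit construction is preferred, the Lubotzky--Phillips--Sarnak Ramanujan graphs $X^{p,q}$ serve: for a fixed prime $p$ they are $(p+1)$-regular with girth tending to infinity as $q\to\infty$, and the Hoffman bound applied to their spectral gap gives $\chi(X^{p,q})\geq 1+(p+1)/(2\sqrt{p})\to\infty$ as $p\to\infty$. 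Alternatively, one can start from an Erd{\H o}s-type graph of large girth and large chromatic number, which has bounded maximum degree, and regularize it by taking many disjoint copies and joining the copies of each vertex of deficient degree along a high-girth regular graph; this does not lower the chromatic number and keeps the girth large.
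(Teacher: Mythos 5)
Your argument is correct and follows essentially the same route as the paper's own proof: deduce the corollary from Theorem~\ref{thm:cv_girth} together with the existence of $r$-regular graphs of arbitrarily large girth and chromatic number $\Omega(r/\log r)$, which the paper simply cites to Bollob\'as and which you re-derive via random regular graphs --- in fact Bollob\'as's own method (Poisson limit for short cycles plus a first-moment bound on the independence number). The paper also offers a separate, girth-free ``alternate proof'' of this corollary in Section~\ref{sec:k_chromatic} using the fiid independence ratio of $T_3$ together with Lov\'asz's computation of the chromatic number of Kneser graphs; your proposal matches the original Cs\'oka--Vidny\'anszky argument rather than that alternate route.
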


\begin{proof}
    Bollob\'as proved \cite{bollobas1978chromatic} that for every $r$ there are $r$-regular graphs with arbitrarily large girth and chromatic number $\Omega(r/\log r)$. 
\end{proof}

We remark that the proof of Theorem \ref{thm:cv_girth} still goes through if ``girth'' is replaced by ``odd girth''. 
By the comment above these results also apply to Question \ref{q:borel_hom}. Theorem \ref{thm:cv_girth} also leads them to ask the following question. 

\begin{question}[\cite{csoka_vidnyanszky}]\label{q:cv_triangle}
    Is there a finite triangle free graph $H$ which admits an fiid homomorphism from $T_3$?
\end{question}

The first goal of this paper is to point out that by considering the case where $H$ is a \textit{Kneser graph}, we can obtain an alternate proof of Corollary \ref{cor:cv_chromatic} and a positive answer to Question \ref{q:cv_triangle} using already-known results about these graphs. In fact for the latter we can prove the following:

\begin{theorem}\label{thm:easy_girth}
    For any $d,g \in \N$, there is a finite graph $H$ with odd girth at least $g$ with the property that any $d$-regular acyclic Borel graph $G$ admits a Borel homomorphism to $H$. 
\end{theorem}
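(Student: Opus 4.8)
The plan is to exhibit $H$ as a Kneser graph (or, when $g$ is large, a suitable variant of one). Recall $K(n,k)$ has vertex set $\binom{[n]}{k}$ with disjoint $k$-sets adjacent, and that two classical facts hold: $\chi(K(n,k)) = n - 2k + 2$ (Lov\'asz), and the odd girth of $K(n,k)$ equals $2\lceil k/(n-2k)\rceil + 1$. First I would record what $H$ is forced to satisfy. By Marks' theorem there is a $d$-regular acyclic Borel graph of Borel chromatic number $d+1$, so composing a Borel homomorphism to $H$ with a colouring of $H$ forces $\chi(H)\ge d+1$. Moreover, applying Hoffman's ratio bound to the (Borel, measure-preserving) free part of the shift action of a group whose Cayley graph is $T_d$ — the bound survives for measurable independent sets via the spectral theorem for the Markov operator — shows any workable $H$ has fractional chromatic number at least $\tfrac{d+2\sqrt{d-1}}{2\sqrt{d-1}}>2$. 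Taking $n-2k = d-1$ and $k$ large gives $\chi(K(n,k)) = d+1$ and odd girth $\ge g$, and its fractional chromatic number $2+(d-1)/k$ clears the threshold exactly when $g$ is not too large relative to $d$; in that regime set $H = K(2k+d-1,k)$. For larger $g$ this particular Kneser graph fails (its fractional chromatic number tends to $2$), and one must instead take $H$ to be a finite graph of odd girth $\ge g$, chromatic number $\ge d+1$, and fractional chromatic number above the threshold; the role of Kneser graphs here is to supply candidates (or subgraphs of them) for which the next step actually goes through.

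It then remains to prove the core statement: every $d$-regular acyclic Borel graph $G$ admits a Borel homomorphism to $H$. Here I would start from the Borel structure available on any bounded-degree Borel graph — a Borel proper $(d+1)$-vertex-colouring and a Borel proper $(2d-1)$-edge-colouring (Kechris--Solecki--Todorcevic) — and try to promote this data to an honest homomorphism to $H$, i.e.\ to a Borel $(n{:}k)$-colouring when $H$ is a Kneser graph, or the analogous partial colouring respecting deleted edges otherwise. The point of the known Kneser-graph combinatorics is to provide slack: the gap between $\chi(H) = d+1$ and the much larger palette of $H$, together with the flexibility of $(n{:}k)$-colourings along trees and low-treewidth pieces, is what lets colourings be rerouted and extended. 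Concretely, I would feed the Borel edge-colouring into a recursion that propagates a partial homomorphism outward along $G$, resolving conflicts at each stage using this flexibility, and organized so that Borel measurability is maintained throughout.

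The main obstacle is exactly this promotion step. Since $G$ need not be hyperfinite — for instance when every component is a copy of $T_d$ and $G$ carries an invariant measure — there is no toast, so there is no naive local or greedy recursion to fall back on; and the measure obstruction above shows the parameters of $H$ must be chosen with genuine care rather than just ``large enough''. The argument is therefore forced to use a real structural feature of Kneser graphs (the Lov\'asz/Borsuk--Ulam type topological input), not merely their size — the same input that underlies the known fiid results and the paper's alternate proof of Corollary~\ref{cor:cv_chromatic}. Getting that input to produce a Borel homomorphism uniformly over all $d$-regular acyclic Borel $G$, and matching the parameter choice of $H$ to make the recursion close, is the technical heart of the theorem.
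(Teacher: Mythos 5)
Your proposal correctly identifies two necessary conditions on $H$ (it must have chromatic number $\ge d+1$ by Marks, and fractional chromatic number bounded away from $2$ by the spectral bound on the fiid independence ratio), and you correctly note that the Kneser graphs $K(2k+d-1,k)$ cannot serve as $H$ once $g$ is large, since their fractional chromatic number $2+(d-1)/k$ eventually drops below that threshold. But at exactly this point the argument stops: for large $g$ you retreat to ``some finite graph of odd girth $\ge g$, chromatic number $\ge d+1$, and sufficiently large fractional chromatic number'' without producing one, and for the core step---actually building a Borel homomorphism from an arbitrary $d$-regular acyclic Borel $G$---you sketch a conflict-resolving recursion and then explicitly concede that making it close is ``the technical heart of the theorem.'' So the proposal has a genuine gap on both fronts: the target graph $H$ is not exhibited for general $g$, and no construction of the homomorphism is given even for the $H$'s you do exhibit. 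Listing necessary conditions on $H$ is not the same as proving sufficiency.

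The paper's actual proof is far more elementary and uses no Kneser graphs, no Borsuk--Ulam input, and no fractional chromatic number at all. Set $N = d^{2g}$ and take $H$ to be the graph whose vertices are injective labelings of the radius-$g$ ball in $T_d$ by $N$, modulo automorphisms, with two labeled balls adjacent when they can be overlapped consistently along a one-step shift. A short parity argument---track the position of the vertex labeled $0$ around a cycle---shows $H$ has odd girth $> g$. For the homomorphism, pass to the power graph $G^{\le 2g}$ (bounded degree $< N$) and use Kechris--Solecki--Todor\v{c}evi\'c to get a Borel proper $N$-coloring $f$; this $f$ is injective on every $g$-ball, so $x \mapsto f \res B_G(x,g)$ is the desired Borel homomorphism. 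The graph $H$ here is enormous (doubly exponential in $g$), which is precisely what sidesteps the fractional-chromatic obstruction you were (rightly) worried about; trying to squeeze $H$ down to a Kneser graph is what makes your route hard, and is not needed for this theorem.
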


``Borel'' can be upgraded to ``continuous'' in this result provided the appropriate topological assumptions on $G$ for this to make sense. We do not know if ``odd girth'' can be strengthened to ``girth''. See Question \ref{q:odd_girth}.

The second goal of this paper is to replicate Corollary \ref{cor:cv_chromatic} in the Borel setting for \textit{hyperfinite graphs}. 

\begin{definition}
\
    \begin{itemize}
        \item A graph is called \textit{component finite} if each of its connected components is finite.
        \item A Borel graph $G$ is called \textit{hyperfinite} if there is a sequence $G_0 \subseteq G_1 \subseteq \cdots$ of component finite Borel graphs, each with vertex set $V(G)$, with $\bigcup_n E(G_n) = E(G)$. 
    \end{itemize}
\end{definition}

It is easy to see that hyperfiniteness of a Borel graph $G$ depends only on the equivalence relation whose classes are the connected components of $G$. In fact the motivation for studying hyperfinite Borel graphs comes from the theory of so-called \textit{countable Borel equivalence relations} (CBERs), where hyperfiniteness is an essential notion with many notorious open problems attached. 

\begin{question}\label{q:hyperfinite_hom}
    Which finite graphs $H$ have the following property: For any 3-regular \textbf{hyperfinite} acyclic Borel graph $G$, $G$ admits a Borel homomorphism to $H$. 

    Is the set of such $H$ the same as the set from Question \ref{q:borel_hom}?
\end{question}

A negative answer to the second part of this question would be quite significant as it would give us a seemingly new tool for proving that a (3-regular acyclic) graph is not hyperfinite. Our current set of tools for proving that CBERs are non-hyperfinite is quite limited, and expanding it will be necessary for making progress on the aforementioned open questions about hyperfiniteness. Unfortunately, Conley et al. \cite{hyperfinite.acyclic} have shown that when $H$ is a complete graph, the answers to Questions \ref{q:borel_hom} and \ref{q:hyperfinite_hom} are the same. That is, the proper coloring problem cannot be used to distinguish hyperfinite forests. 

This motivates the extension of some of the results from \cite{csoka_vidnyanszky} to the hyperfinite setting.

\begin{theorem}\label{thm:hyperfinite_girth}
    Let $d,l \in \N$, $d > 1$. Suppose $H$ is a graph with $|V(H)| \leq d\cdot (d-1)^{l}$ and odd girth greater than $2l + 1$. 
    Then there is a hyperfinite $d$-regular Borel forest with no Borel homomorphism to $H$. 
\end{theorem}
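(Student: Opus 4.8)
The plan is to reduce the statement to a lower bound on a Borel chromatic number and then adapt the construction of Conley et al.\ \cite{hyperfinite.acyclic}.

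\emph{The reduction.} For a $d$-regular acyclic Borel graph $G$, let $G_{\mathrm{odd}}$ be the Borel graph on $V(G)$ in which $u$ and $v$ are adjacent iff $\mathrm{dist}_G(u,v)$ is odd and at most $2l+1$. If $f\colon G\to H$ is a homomorphism and $u,v$ are $G_{\mathrm{odd}}$-adjacent with $f(u)=f(v)$, then the $f$-image of the unique $u$--$v$ geodesic in $G$ is an odd closed walk in $H$ of length $\le 2l+1$, hence $H$ contains an odd cycle of length $\le 2l+1$, contradicting the hypothesis on the odd girth of $H$. So every Borel homomorphism $G\to H$ is in particular a Borel proper coloring of $G_{\mathrm{odd}}$ by the $\le |V(H)|\le d(d-1)^l$ colors $V(H)$. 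Thus it suffices to produce a hyperfinite $d$-regular acyclic Borel forest $G$ whose Borel chromatic number $\chi_{\mathrm{Borel}}(G_{\mathrm{odd}})$ exceeds $d(d-1)^l$. Note that $G_{\mathrm{odd}}$ is itself bipartite, so this is a genuinely Borel phenomenon, of the kind already exhibited by the Kechris--Solecki--Todor\v{c}evi\'c graph; it cannot be detected by classical chromatic numbers.

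\emph{The construction.} I would follow the method of \cite{hyperfinite.acyclic}, in which hyperfinite $d$-regular acyclic Borel forests of \emph{maximal} Borel chromatic number are built as hyperfinite limits of finite $d$-regular graphs of growing girth, arranged (roughly) in an odometer-style tower so that any Borel coloring with too few colors, being Baire-measurable and hence locally determined, is forced to run out of colors on the local structure. Here the target is $G_{\mathrm{odd}}$ rather than $G$, so the relevant local structure is a ball of radius $l+1$ in $T_d$ together with the odd-distance edges among its inner sphere (of size $d(d-1)^{l-1}$) and outer sphere (of size $d(d-1)^l$); this is where the quantity $d(d-1)^l$ enters. Concretely one works with non-backtracking closed walks of length $2l+1$ in the finite graphs: each such walk induces a Borel monodromy $\sigma$ on the cover, which is fixed-point free, with $x$ and $\sigma(x)$ always $G_{\mathrm{odd}}$-adjacent (indeed at distance exactly $2l+1$), and one chooses the finite graphs so that, through each vertex, these walks assemble into a structure forcing $d(d-1)^l+1$ colors for the resulting Borel subgraph of $G_{\mathrm{odd}}$.

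\emph{The main obstacle.} The crux is exactly this last point: transporting the lower-bound argument of \cite{hyperfinite.acyclic} from $G$ to the auxiliary graph $G_{\mathrm{odd}}$, and verifying that it yields precisely the constant $d(d-1)^l$. As in \cite{hyperfinite.acyclic}, hyperfiniteness and a large Borel chromatic number pull against each other, and one must check their gadget-and-tower construction survives the change of target graph, and that the combinatorics of the finite $d$-regular graphs can be tuned so that the induced constraints on $G_{\mathrm{odd}}$ are as rigid as possible --- tight enough to forbid $d(d-1)^l$ colors but, necessarily, no tighter. I also expect a small amount of care is needed to see that the covering maps feeding into the tower can be taken with hyperfinite domain; this should be routine by the same bookkeeping as in \cite{hyperfinite.acyclic}, and it is there that the hyperfiniteness conclusion is secured. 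Finally, the case $l=0$ (where $G_{\mathrm{odd}}=G$) is exactly \cite{hyperfinite.acyclic}, and $d=2$ reduces to the classical fact that hyperfinite $2$-regular forests can have Borel chromatic number $3$, so one may assume $d\ge 3$ and $l\ge 1$.
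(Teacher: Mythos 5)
Your reduction is correct and clean: since any odd closed walk of length $\le 2l+1$ in $H$ would yield an odd cycle of length $\le 2l+1$, a Borel homomorphism $f\colon G\to H$ must be a proper Borel coloring of the auxiliary graph $G_{\mathrm{odd}}$ with at most $d(d-1)^l$ colors, so it suffices to produce a hyperfinite $d$-regular Borel forest $G$ with $\chi_B(G_{\mathrm{odd}}) > d(d-1)^l$. This is a perfectly good reformulation, and in fact it is implicit in the paper's own proof: there the contradiction is produced by exhibiting two vertices $z,z'$ at distance exactly $2l+1$ in $G$ with $c(z)=c(z')$.

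However, the actual content of the theorem is the construction together with the chromatic-number lower bound, and your proposal does not carry this out. You gesture at ``following the method of \cite{hyperfinite.acyclic}'' via a tower of finite $d$-regular graphs, covers, and monodromies, but no such argument is given, and you yourself flag it as ``the main obstacle.'' This is the entire difficulty. Moreover, the sketch you give does not match how \cite{hyperfinite.acyclic} or \cite{hom_graph} actually obtain the lower bound: there is no finite-graph tower or covering-space monodromy in play. Both papers (and the present one) use Marks's determinacy method. Concretely, the paper takes $G = \textnormal{Hom}^e(T_d,\mathcal{H})$, an invariant subgraph of $S(\Z_2^{*d},S_d,2^\omega)$ built from the edge-labeled $\mathbb{G}_0$-subgraph $\mathcal{H}$, which is hyperfinite because $\mathcal{H}$ is. Then one identifies the at most $d(d-1)^l$ colors with the sphere $T$ of radius $l+1$ in $\textnormal{Cay}(\Z_2^{*d},S_d)$ (this is where the numerical bound $d(d-1)^l$ enters), fixes the inner ball of radius $l$ to the zero-branch, and for each $x\in V(\mathcal{H})$ and $\tau\in T$ sets up a two-player game in which Alice fills in the $\tau$-sector and Bob fills in the rest, Bob winning if the resulting vertex is colored $\tau$. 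Borel determinacy forces Bob to win some $\tau=d(x)$ for every $x$; the function $d$ is Baire measurable; one then picks $\tau$ with $d^{-1}(\tau)$ nonmeager, uses the comeager structure of $\mathcal{H}$ (Lemma~\ref{lem:edge_G0}) to find $x,x'\in d^{-1}(\tau)$ suitably linked, and plays the two Bob strategies against each other to manufacture $z,z'$ at distance $2l+1$ with the same color. None of this appears in your proposal, so as written it is a correct reduction plus an admitted gap where the proof should be.

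If you do want to push your coloring-of-$G_{\mathrm{odd}}$ framing, note that it buys you a slightly more modular statement (the odd-girth hypothesis is completely absorbed into the reduction, and what remains is purely a Borel chromatic number bound on a power graph), but you would still need to run essentially the determinacy argument above on $G_{\mathrm{odd}}$; there is no known shortcut via an odometer-style hyperfinite limit construction, and baire-measurability alone does not suffice (measurable colorings of $G_{\mathrm{odd}}$ with far fewer colors exist, as the paper notes via \cite{BCGGRV}).
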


\begin{cor}\label{cor:hyperfinite_chromatic}
    For every $d \in \N$, $d > 2$, there are finite graphs of arbitrarily large chromatic number which do not admit Borel homomorphisms from an arbitrary hyperfinite $d$-regular acyclic Borel forest. 
\end{cor}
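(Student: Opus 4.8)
The plan is to deduce the corollary directly from Theorem~\ref{thm:hyperfinite_girth} by applying it to suitably chosen target graphs. Fix $d > 2$. What I need is, for every $k \in \N$, a finite graph $H$ with $\chi(H) \ge k$ that meets the hypotheses of Theorem~\ref{thm:hyperfinite_girth} for some $l$: namely $|V(H)| \le d(d-1)^l$ together with odd girth greater than $2l+1$. Combining the two requirements, this asks for a graph of large chromatic number whose odd girth is at least roughly $2\log_{d-1}|V(H)|$ --- equivalently, a graph with chromatic number $k$ and odd girth $g$ on at most about $(d-1)^{g/2}$ vertices.

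The real content --- essentially all there is beyond quoting Theorem~\ref{thm:hyperfinite_girth} --- is that such graphs exist, and that establishing this requires the ``efficient'' constructions of graphs with prescribed chromatic number and odd girth rather than probabilistic ones. I would invoke the known fact that for every $k \ge 3$ and every $g$ there is a finite graph with chromatic number exactly $k$, odd girth at least $g$, and a number of vertices bounded by $c_k g^{k}$ for a constant $c_k$ depending only on $k$. Such graphs arise by iterating the generalized Mycielskian (cone) operation: one starts from the odd cycle $C_g$ (chromatic number $3$, odd girth $g$) and applies the generalized Mycielskian of height $\lceil g/2 \rceil$ a total of $k-3$ times. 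Each application multiplies the vertex count by only $O(g)$ and keeps the odd girth at least $g$, while --- the delicate ingredient --- raising the chromatic number by exactly one; that last assertion, for iterated generalized Mycielskians of odd cycles, is a theorem of Stiebitz (see also Gy\'arf\'as--Jensen--Stiebitz) obtained via the topological lower bound on chromatic number. I expect this to be the step deserving the most care: the generalized Mycielskian does \emph{not} in general raise the chromatic number, so the topological input is genuinely needed, and a softer construction will not do, since random graphs, Ramanujan graphs, and Kneser (or Schrijver) graphs all have vertex counts exponential in the odd girth, hopeless against the exponential term $(d-1)^l$. (One should also note that odd girth, not girth, is essential in Theorem~\ref{thm:hyperfinite_girth}: by the Moore bound there is no polynomial-size construction achieving large girth together with chromatic number exceeding $3$.)

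Granting that input, the rest is bookkeeping. Fixing $k$, let $H$ be such a graph with $\chi(H) = k$, odd girth at least $g$, and $|V(H)| \le c_k g^{k}$, and let $l$ be the largest integer with $2l+1 < g$, so that $l$ grows linearly in $g$ and the odd-girth hypothesis of Theorem~\ref{thm:hyperfinite_girth} holds automatically. Since $|V(H)| \le c_k g^{k}$ grows only polynomially in $g$ while $d(d-1)^l \ge 2^{l}$ grows exponentially in $g$, for all $g$ large enough (depending only on $d$ and $k$) we get $|V(H)| \le d(d-1)^l$; fix such a $g$. Then Theorem~\ref{thm:hyperfinite_girth} applies and yields a hyperfinite $d$-regular acyclic Borel forest with no Borel homomorphism to $H$. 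As $\chi(H) = k$ with $k$ arbitrary and $d > 2$ arbitrary, this proves Corollary~\ref{cor:hyperfinite_chromatic}.
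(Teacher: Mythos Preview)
Your proof is correct: iterated generalized Mycielskians of odd cycles, with the Stiebitz topological lower bound to guarantee the chromatic number increases at each step, do give graphs of chromatic number $k$, odd girth $\ge g$, and only polynomially many vertices in $g$, and that is exactly what Theorem~\ref{thm:hyperfinite_girth} needs.

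However, your approach differs from the paper's, and your parenthetical remark dismissing Schrijver graphs is mistaken --- and this is worth correcting, because Schrijver graphs are precisely what the paper uses. For fixed $m$, the Schrijver graph $K'(2k+m,k)$ has chromatic number $m+2$ (Theorem~\ref{thm:schrijver_chromatic}), odd girth $\Theta(k/m)$ (Lemma~\ref{lem:kneser_odd_girth}), and only $O(k^m)$ vertices (Lemma~\ref{lem:schrijver_cardinality}): this last count is \emph{polynomial} in the odd girth, not exponential. You were right that the full Kneser graph $K(2k+m,k)$ has $\binom{2k+m}{k}$ vertices, which is exponential in $k$, but passing to the Schrijver critical subgraph collapses this to a polynomial. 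So both routes work and both ultimately rest on a topological chromatic-number bound (Lov\'asz--Kneser via Schrijver for the paper, Stiebitz for you), applied to different explicit families. The paper's route has the virtue of staying within the Kneser/Schrijver world that motivates the whole discussion; yours is self-contained once one grants Stiebitz's theorem, and gives slightly better control on the vertex count ($O(g^{k-2})$ rather than $O(g^{k})$, though this is irrelevant here).
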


Of course, to prove the corollary from the theorem we will need to find a finite graph $H$ with the right mix of odd girth, number of vertices, and chromatic number. For this we will use certain critical subgraphs of the Kneser graphs already studied in combinatorics, the so-called \textit{Schrijver graphs} \cite{schrijver_graph}. See Section \ref{sec:k_chromatic}. 

Observe that setting $l = 0$ and $H = K_d$ in Theorem \ref{thm:hyperfinite_girth} recovers the result from \cite{hyperfinite.acyclic}: there is a hyperfinite $d$-regular acyclic Borel forest with no Borel proper $d$-coloring. 

These results touch on the relationship between measure and hyperfiniteness. When null-sets are ignored, hyperfiniteness becomes much more well understood. Among other things, both parts of the analogue of Question \ref{q:hyperfinite_hom} are known; it follows from \cite{BCGGRV} (see also \cite{conley.miller.toast}) that $H$ has a measurable homomorphism from any hyperfinite acyclic regular Borel graph on a standard probability space if and only if $H$ has an odd cycle. Thus measure cannot be used to even make a dent in Corollary \ref{cor:hyperfinite_chromatic}.

In place of measure, like the authors in \cite{hyperfinite.acyclic}, we will turn in Section \ref{sec:main} to the determinacy method of Marks \cite{Marks}. We follow the newer approach of Brandt et al. \cite{hom_graph}, who gave an alternate proof of the main theorem from \cite{hyperfinite.acyclic}. 


\section{Preliminaries}

\subsection{Bernoulli shifts and Factors of i.i.d. variables}\label{subsec:shift}

In this section we define the basic Borel graphs which most of our questions are about. 

\begin{definition}
    \ 
    \begin{itemize}
        \item     Let $\Gamma$ be a countable group and $X$ a set. The \textit{Bernoulli shift} of $\Gamma$ with base space $X$ is the left shift action $\Gamma \acts X^\Gamma$: 
    \[ \gamma \cdot x = \delta \mapsto x(\gamma\inv \delta). \]
        \item $F(\Gamma,X)$ denotes the restriction of this action to the \textit{free part}, the set of points with trivial stabilizer. 
        \item Let $\Gamma \acts X$ be an action, and $S \subseteq \Gamma$ symmetric. The \textit{Schreier graph} of this action with respect to $S$, denoted $\sch(\Gamma,S,X)$, is the graph with vertex set $X$ where two vertices $x,y \in X$ are adjacent if $\exists \gamma \in S, \gamma x = y$.  
        \item $\cay(\Gamma,S) := \sch(\Gamma,S,\Gamma)$, where the action $\Gamma \acts \Gamma$ is by \textbf{right} multiplication.
        \item For $X$ a set, $S(\Gamma,S,X) := \sch(\Gamma,S,F(\Gamma,X))$.
    \end{itemize}

\end{definition}

That is, $S(\Gamma,S,X)$ is the Schreier graph of the free part of the Bernoulli shift of $\Gamma$ with base space $X$. When $X$ is a standard Borel space, standard probability space, or topological space, this structure is inherited by $X^\Gamma$ and then $F(\Gamma,X)$. For instance, if $X$ is a standard Borel space then $S(\Gamma,S,X)$ is a Borel graph. 
$F(\Gamma,2^\omega)$ has the important property that any free Borel $\Gamma$-action on a standard Borel space admits a Borel equivariant injection into it. Thus the Schreier graph of any such action with respect to a given $S \subseteq \Gamma$ admits an injective Borel homomorphism to $S(\Gamma,S,2^\omega)$. Thus Borel graph homomorphism problems often reduce to consideration of these graphs.

For much of this paper, following \cite{BCGGRV, hom_graph}, we will use the Cayley graph of the group $\Z_2^{* d} := \langle a_0,\ldots,a_{d-1} \mid \forall i, a_i^2 = 1 \rangle $ with respect to the generating set $S_d := \{a_0,\ldots,a_{d-1}\}$ as a stand-in for $T_d$. Of course, these are the same graph, but the former has a canonical $d$-edge coloring which can make a difference in some settings.

For the definition of fiid processes on $T_d$ we will need to lose these edge colors, so we need to define a Bernoulli shift for this graph without any group action.

\begin{definition}
    \ 
    \begin{itemize}
        \item Let $o \in V(T_d)$, so that $(T_d,o)$ is the rooted $d$-regular tree. Let $X(T_d)$ denote the set of copies of $(T_d,o)$ with each vertex labeled by an element of $2^\omega$, where to copies are considered the same if there is an isomorphism between them preserving the labels. 
        \item Let $S'(T_d)$ be the graph on $X(T_d)$ in which two labeled trees $(T_d,o,x)$ and $(T_d,o,y)$ are adjacent if there is a vertex $v \in N(o)$ such that $(T_d,v,x) \cong (T_d,o,y)$. 
        \item Let $F(T_d) \subseteq X(T_d)$ consist of the labeled trees whose connected components in $S'(T_d)$ are $d$-regular trees. Let $S(T_d) = S'(T_d) \res F(T_d)$. 
    \end{itemize}
\end{definition}

It is easy to see that $S(T_d)$ is universal for Borel $d$-regular forests in the same way $F(\Gamma,2^\omega)$ and $S(\Gamma,-,2^\omega)$ are for Borel $\Gamma$-spaces.

Like $F(\Gamma,2^\omega)$, $F(T_d)$ inherits a natural probability measure from the coin flip measure on $2^\omega$. 

\begin{definition}
    A \textit{factor of i.i.d.} (fiid) labeling of $T_d$ is a measurable function $F(T_d) \to \Lambda$ for some finite set $\Lambda$. An fiid homomorphism of $T_d$ to a finite graph $H$ is a measurable homomorphism $S(T_d) \to H$. 
\end{definition}

The interpretation of this is that a measurable function $c : F(T_d) \to \Lambda$ 
is the same as a random variable taking values in $\Lambda^{T_d}$ (modulo automorphisms) which can be obtained as a factor of the random variable taking values in $(2^\omega)^{T_d}$ in which each vertex independently randomly chooses a label from $2^\omega$ according to the coin flip measure. 

\subsection{Kneser graphs and fractional colorings}

\begin{definition}[{See \cite[Chapter 3]{scheinerman_ullman}}]
    Let $k \le n \in \N$. The \textit{Kneser graph} $K(n,k)$ is the graph with vertex set $\binom{n}{k}$, the set of $k$-element subsets of $n$, where two sets are adjacent if they are disjoint. 
\end{definition}

When $n < 2k$ $K(n,k)$ contains no edges, so one typically assumes $n \geq 2k$.
These graphs are well-studied in combinatorics, partly because of the natural connection they have with fractional chromatic numbers.
See for instance \cite{scheinerman_ullman}. 

\begin{definition}
    \ 
    \begin{itemize}
        \item Let $k \leq n \in \N$. Let $G$ be a graph. A $k$-fold $n$-coloring of $G$ is a sequence $I_0,\ldots,I_{n-1} \subseteq V(G)$ of independent sets so that each vertex is in exactly $k$ sets.
        \item The \textit{fractional chromatic number} of $G$, denoted $\chi^*(G)$, is 
        \[ \chi^*(G) := \inf \{ \frac{n}{k} \mid G \textnormal{ admits a } k\textnormal{-fold } n \textnormal{ coloring}\}.  \]
    \end{itemize}
\end{definition}

The point is that a $k$-fold $n$-coloring of $G$ is, plainly, the same as a homomorphism to $K(n,k)$. This generalizes the fact that proper $n$-colorings are the same as homomorphisms to $K_n = K(n,1)$. 

It is common to lower bound the chromatic number of a graph by upper bounding the size of its independent sets. This bound also applies to the fractional chromatic number. 

\begin{definition}
    Let $G$ be a finite graph. $\alpha(G) := m/|V(G)|$, where $m$ is the maximal size of an independent set. This is called the \textit{independence ratio}.
\end{definition}

\begin{lemma}[{\cite[Proposition 3.1.1]{scheinerman_ullman}}]\label{lem:fractional_chrom_vs_ind}
    Let $G$ be a finite graph. $\chi^*(G) \geq 1/\alpha(G)$. When $G$ is vertex transitive, this is an equality.
\end{lemma}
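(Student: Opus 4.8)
The plan is to treat the two assertions separately, both via elementary double counting.

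For the inequality $\chi^*(G) \geq 1/\alpha(G)$, I would fix an arbitrary $k$-fold $n$-coloring $I_0,\ldots,I_{n-1}$ of $G$ and count pairs $(v,j)$ with $v \in I_j$ in two ways. On one hand, each vertex lies in exactly $k$ of the sets $I_j$, so the number of such pairs is $k\,|V(G)|$. On the other hand, each $I_j$ is independent, so $|I_j| \leq \alpha(G)\,|V(G)|$ by definition of the independence ratio, and the number of pairs is at most $n\,\alpha(G)\,|V(G)|$. Comparing gives $k \leq n\,\alpha(G)$, i.e. $n/k \geq 1/\alpha(G)$. Since this holds for every $k$-fold $n$-coloring, taking the infimum yields $\chi^*(G) \geq 1/\alpha(G)$.

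For the reverse inequality when $G$ is vertex transitive, I would exhibit an explicit coloring witnessing the bound. Let $\Gamma = \mathrm{Aut}(G)$ and let $I \subseteq V(G)$ be a maximum independent set, so $|I| = \alpha(G)\,|V(G)|$. Consider the family $(\gamma I)_{\gamma \in \Gamma}$ of independent sets, so $n = |\Gamma|$. The key claim is that every vertex lies in the same number $k$ of these sets: for any two vertices $v,w$, choosing $\gamma_0 \in \Gamma$ with $\gamma_0 v = w$, the map $\gamma \mapsto \gamma_0\gamma$ is a bijection from $\{\gamma : v \in \gamma I\}$ onto $\{\gamma : w \in \gamma I\}$, so the coverage count is vertex-independent. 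Summing, $k\,|V(G)| = \sum_{\gamma \in \Gamma} |\gamma I| = |I|\,|\Gamma|$, so $k = |I|\,|\Gamma|/|V(G)|$ (an integer). Hence $(\gamma I)_{\gamma \in \Gamma}$ is a $k$-fold $n$-coloring with $n/k = |V(G)|/|I| = 1/\alpha(G)$, giving $\chi^*(G) \leq 1/\alpha(G)$; combined with the first part, equality holds.

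There is no serious obstacle here: the proof is short and classical. The only step needing a little care is the claim that the translates of a maximum independent set cover every vertex equally often — this is exactly where transitivity of $\mathrm{Aut}(G)$ enters (and one could just as well average over any vertex-transitive subgroup of $\mathrm{Aut}(G)$).
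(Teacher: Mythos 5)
The paper does not prove this lemma; it simply cites it as Proposition 3.1.1 of Scheinerman--Ullman, so there is no in-paper argument to compare against. Your proof is correct and is the standard one: the double count of incidences $(v,j)$ with $v\in I_j$ gives $k\,|V(G)| = \sum_j |I_j| \le n\,\alpha(G)\,|V(G)|$, hence $n/k \ge 1/\alpha(G)$ for every $k$-fold $n$-coloring; and in the vertex-transitive case the orbit $(\gamma I)_{\gamma\in\mathrm{Aut}(G)}$ of a maximum independent set is a $k$-fold $|\mathrm{Aut}(G)|$-coloring with $k = |\mathrm{Aut}(G)|\,|I|/|V(G)|$, the covering multiplicity being constant across vertices precisely by transitivity. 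One small point worth making explicit: the multiplicity $k$ is automatically a nonnegative integer because it equals $|\{\gamma : v\in\gamma I\}|$ for any fixed $v$, so no separate divisibility argument is needed. This matches the textbook proof that the cited reference gives.
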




Since in this paper we work with Borel graphs, to get a useful analogue of this Lemma we need to replace counting with a measure. 

\begin{definition}
    Let $G$ be a Borel graph and $\mu$ a Borel probability measure on $V(G)$.
    \begin{itemize}
        \item $\alpha_{\mu}(G)$ is the supremum of the measures $\mu(I)$ for $I \subseteq V(G)$ a Borel (equivalently, measurable) independent set. This is called the \textit{measurable independence ratio}.
        \item A \textit{Borel $k$-fold $n$-coloring} of $G$ is a Borel homomorphism from $G$ to $K(n,k)$. Likewsie for a $\mu$-measurable $k$-fold $n$-coloring. The Borel and $\mu$-measurable fractional chromatic numbers of $G$, denoted $\chi^*_B(G)$ and $\chi^*_\mu(G)$ respectively, are defined accordingly. 
    \end{itemize}
\end{definition}

\begin{lemma}\label{lem:meas_ind_leq}
    Let $G$ be a Borel graph and $\mu$ a Borel probability measure on $V(G)$.
    $\chi_B^*(G) \geq \chi_\mu^*(G) \geq 1/\alpha_\mu(G)$.    
\end{lemma}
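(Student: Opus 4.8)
The plan is to prove the two inequalities separately, each being essentially a matter of unwinding the definitions and adapting the classical argument behind Lemma \ref{lem:fractional_chrom_vs_ind}.

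For the inequality $\chi_B^*(G) \geq \chi_\mu^*(G)$: by definition $\chi_B^*(G)$ is the infimum of $n/k$ over pairs $(n,k)$ for which $G$ admits a Borel homomorphism to $K(n,k)$, and similarly $\chi_\mu^*(G)$ is the infimum of $n/k$ over pairs for which $G$ admits a $\mu$-measurable homomorphism to $K(n,k)$. Since every Borel function is $\mu$-measurable, any Borel $k$-fold $n$-coloring is in particular a $\mu$-measurable one, so the set of ratios $n/k$ witnessed in the Borel sense is a subset of those witnessed in the $\mu$-measurable sense; taking infima reverses the inclusion and gives the inequality. (One should note the degenerate case where $G$ has no edges at all, or more generally where one of these fractional chromatic numbers is achieved trivially — but the inequality $\inf A \geq \inf B$ when $A \subseteq B$ holds regardless, with the usual convention $\inf \emptyset = +\infty$.)

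For the inequality $\chi_\mu^*(G) \geq 1/\alpha_\mu(G)$: fix $n \geq k$ and a $\mu$-measurable $k$-fold $n$-coloring, i.e.\ Borel (up to a null set) independent sets $I_0,\ldots,I_{n-1}$ with each vertex lying in exactly $k$ of them. Then $\sum_{j=0}^{n-1} \mathbf{1}_{I_j} = k$ pointwise ($\mu$-a.e.), so integrating against $\mu$ gives $\sum_{j=0}^{n-1} \mu(I_j) = k$. Each $I_j$ is a measurable independent set, hence $\mu(I_j) \leq \alpha_\mu(G)$, so $k \leq n \cdot \alpha_\mu(G)$, i.e.\ $n/k \geq 1/\alpha_\mu(G)$. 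Taking the infimum over all valid $(n,k)$ yields $\chi_\mu^*(G) \geq 1/\alpha_\mu(G)$.

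There is no real obstacle here; the only points requiring a word of care are the handling of $\mu$-null sets (a $\mu$-measurable independent set may fail to be literally Borel, but differs from a Borel set by a null set, which does not affect its measure or the counting identity holding a.e.) and the edge case where no finite fractional coloring exists, in which case the left-hand side is $+\infty$ and the inequalities are vacuous. Both inequalities are then immediate from the displayed computation.
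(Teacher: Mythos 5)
Your proof is correct. The paper states this lemma without proof, treating it as routine, and your argument is exactly the expected one: the first inequality from Borel implying $\mu$-measurable, the second by integrating the pointwise identity $\sum_j \mathbf{1}_{I_j} = k$ and bounding each $\mu(I_j)$ by $\alpha_\mu(G)$.
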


The study of fractional chromatic numbers in the context of descriptive combinatorics was initiated by Meehan \cite{meehan}. Bernshteyn then proved an analogue of the second part of Lemma \ref{lem:fractional_chrom_vs_ind}.

\begin{theorem}[\cite{bernshteyn_fractional}]\label{thm:meas_ind_eq}
    Let $\Gamma$ be a countable group and $S \subseteq \Gamma$ finite and symmetric. Let $\mu$ be the usual measure on $F(\Gamma,2^\omega)$. 
    \[ \chi_B^*(S(\Gamma,S,2^\omega)) = 1/\alpha_\mu(S(\Gamma,S,2^\omega)). \]
\end{theorem}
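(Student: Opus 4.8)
Write $G = S(\Gamma,S,2^\omega)$, let $\mu$ be the Bernoulli measure on it, and set $\alpha = \alpha_\mu(G)$. Lemma~\ref{lem:meas_ind_leq} already gives $\chi_B^*(G) \ge 1/\alpha$, so the task is the reverse inequality: for every $\epsilon > 0$ I want $n,k \in \N$ with $n/k < 1/\alpha + \epsilon$ together with a Borel homomorphism $G \to K(n,k)$, that is, a Borel $k$-fold $n$-coloring. The plan is to assemble such a coloring from many almost-independent copies of one good independent set, repairing the unavoidable small defect with the measurable Lov\'asz Local Lemma.

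Fix a Borel independent set $I$ with $\mu(I) > \alpha - \epsilon$. The first step is to replace it by an independent set of almost the same measure that is a \emph{bounded-radius} factor. Since $\mathbf{1}_I$ is a measurable function of the labeled component, martingale convergence approximates it in $L^1$ by a Borel function depending only on the radius-$r$ ball around a vertex, $r$ large; rounding to $\{0,1\}$ yields a radius-$r$ Borel set $I'$ with $\mu(I' \triangle I)$ small, and because $I$ is independent the edges with both endpoints in $I'$ have small total measure, so deleting one endpoint of each such edge --- deciding which endpoint by comparing an auxiliary i.i.d.\ label, which keeps the operation Borel and of radius $r+1$ --- produces an honest Borel independent set, still of measure $> \alpha - \epsilon$, determined by a bounded-radius rule that in fact produces an independent set for every labeling (not just the generic one). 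Rename this set $I$ and put $p = \mu(I)$.

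Next, fix a small $\delta > 0$ and put $k = \lceil (p-\delta)n \rceil$. A Borel isomorphism $(2^\omega)^n \cong 2^\omega$, applied coordinatewise, identifies $S(\Gamma,S,(2^\omega)^n)$ with $G$; now define Borel independent sets $I^{(1)},\dots,I^{(n)}$ in $S(\Gamma,S,(2^\omega)^n)$ by the same bounded-radius rule as $I$ but reading off, for $I^{(j)}$, the $j$-th coordinate of each label. Each $I^{(j)}$ is independent (the rule produces an independent set for every labeling), each has measure $p$, each is a radius-$r$ factor, and for every vertex $v$ the events ``$v \in I^{(j)}$'', $j = 1,\dots,n$, are jointly independent, since they are governed by disjoint blocks of the Bernoulli labels in the radius-$r$ ball of $v$. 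A Chernoff bound then shows that the constraint ``$v$ lies in at least $k$ of the $I^{(j)}$'' is violated with probability at most $e^{-c n}$ for some $c = c(\delta) > 0$, while this constraint depends only on the radius-$r$ ball of $v$, so two such constraints interact only if the vertices are within distance $2r$, hence each interacts with only boundedly many others (a bound depending on $|S|$ and $r$). For $n$ large in terms of $|S|$, $r$, $\delta$ the Local Lemma criterion is satisfied with room to spare, so the measurable Lov\'asz Local Lemma of Bernshteyn, in its Borel form, produces Borel-measurably a reassignment of the defining labels under which \emph{every} vertex lies in at least $k$ of the resulting sets; the independence of each $I^{(j)}$ is a hard constraint and survives any reassignment. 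Deleting surplus memberships vertex by vertex (keep, at each vertex, its first $k$ colors) then turns the $I^{(j)}$ into the color classes of a Borel $k$-fold $n$-coloring of $S(\Gamma,S,(2^\omega)^n) \cong G$, so $\chi_B^*(G) \le n/k \le 1/(p-\delta)$. Letting $\delta \to 0$ and $\epsilon \to 0$ gives $\chi_B^*(G) \le 1/\alpha$, as desired.

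The step I expect to be the crux is the second paragraph, the reduction to a bounded-radius independent set of near-maximal density: it is exactly this locality that makes the Local Lemma available (a general Borel independent set induces a constraint depending on the whole, possibly infinite, component, and the naive alternative of patching the uncovered vertices by a fresh proper $(|S|+1)$-coloring would ruin the ratio by an additive $|S|+1$). The remaining ingredients --- the Chernoff estimate, the known Borel Local Lemma, and the limiting argument --- are routine.
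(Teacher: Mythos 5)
The paper does not prove this theorem; it is cited from Bernshteyn, so there is no in-paper argument to compare against. Judging your plan on its own merits, the outline is sensible (approximate a near-optimal Borel independent set by a bounded-radius rule via martingale convergence, tensor $n$ i.i.d.\ copies, repair the exceptional set with a Local Lemma), and the local-approximation step in your second paragraph is indeed the right reduction. But the pivotal tool is mis-specified in a way that is exactly the hard part of the statement.

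You invoke ``the measurable Lov\'asz Local Lemma of Bernshteyn, in its Borel form.'' These are two distinct theorems, and the gap between them is precisely the content of the inequality $\chi_B^* \le 1/\alpha_\mu$ you are trying to prove. Bernshteyn's measurable LLL produces a solution only off a $\mu$-null set, and, as you yourself observe, patching the leftover null set with a KST proper $(|S|+1)$-coloring costs an additive $|S|+1$ in the ratio, which destroys the bound; so the measurable version genuinely does not suffice. What you need is a Borel LLL valid on Schreier graphs of Bernoulli shifts of arbitrary countable groups. Such a result does exist (via the transfer of $O(\log^* n)$ deterministic LOCAL algorithms to Borel combinatorics on graphs of finite Borel asymptotic separation index, with the LLL itself handled by the Fischer--Ghaffari algorithm under the exponential-slack criterion your Chernoff estimate supplies), but it is a substantial, later, and non-obvious ingredient that must be named and its hypotheses checked; sliding from ``measurable'' to ``in its Borel form'' in a single clause is where the argument is incomplete. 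It is also worth noting that one can avoid the LLL altogether, which is presumably closer to what the paper means by the Kneser homomorphism ``factoring through'' the bounded-radius homomorphism of Section~3: once you have a bounded-radius independent-set rule of density close to $\alpha_\mu$, transfer it to an independent set of comparable density in the finite auxiliary graph $H$ of Section~3, observe that $H$ is vertex-transitive under permuting the $N$ labels, apply Lemma~\ref{lem:fractional_chrom_vs_ind} to get $\chi^*(H)=1/\alpha(H)$, and compose a $k$-fold $n$-coloring of $H$ with the Borel homomorphism $G\to H$ furnished by \cite{kst}. That route replaces the heavy Borel LLL by elementary finite combinatorics plus the KST coloring theorem.
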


We emphasize that on the LHS one has the Borel, not $\mu$-measurable, fractional chromatic number. $\alpha_\mu(S(\Gamma,S,2^\omega))$ is, in the language of the previous section, the ``fiid independence number'' of $\cay(\Gamma,S)$. It is not hard to check that Bernshteyn's result also applies in situations like the Bernoulli shift of $T_d$. 

\begin{prop}\label{prop:meas_ind_eq_Td}
    Let $d \in \N$, $d \geq 2$. Let $\mu$ be the usual measure on $F(T_d)$. 
    \[ \chi_B^*(S(T_d)) = 1/\alpha_\mu(S(T_d)). \]
\end{prop}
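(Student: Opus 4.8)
The plan is to adapt Bernshteyn's argument (Theorem \ref{thm:meas_ind_eq}) from the Cayley graph setting $S(\Gamma,S,2^\omega)$ to the Bernoulli shift $S(T_d)$. The inequality $\chi^*_B(S(T_d)) \geq 1/\alpha_\mu(S(T_d))$ is already given by Lemma \ref{lem:meas_ind_leq}, so the content is the reverse inequality: from a $\mu$-measurable independent set $I \subseteq F(T_d)$ with $\mu(I)$ close to $\alpha := \alpha_\mu(S(T_d))$, produce a \emph{Borel} homomorphism $S(T_d) \to K(n,k)$ with $n/k$ close to $1/\alpha$. Equivalently, one wants a Borel $k$-fold $n$-coloring, i.e.\ a Borel assignment of a $k$-subset of $n$ to each vertex such that adjacent vertices get disjoint subsets.

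The key steps, following Bernshteyn: first, observe that $S(T_d)$ has the structure of a graph associated to a free action of a group (or, since $T_d = \cay(\Z_2^{*d}, S_d)$, note that $S(T_d)$ is a Borel-isomorphic quotient of $S(\Z_2^{*d}, S_d, 2^\omega)$ by forgetting the edge-coloring — so in fact one may simply \emph{cite} Theorem \ref{thm:meas_ind_eq} with $\Gamma = \Z_2^{*d}$, $S = S_d$, provided one checks that $\mu$-measurable independent sets, Borel independent sets, and fractional chromatic numbers are unchanged by passing between $S(T_d)$ and $S(\Z_2^{*d},S_d,2^\omega)$). Second, if one instead re-runs the proof: by an ergodicity/zero-one law argument the measurable independence ratio is witnessed by sets of measure arbitrarily close to $\alpha$ that are, moreover, "spread out" uniformly; then for suitable large $n$ one finds, via a random (fiid) shift argument or a marriage/Lovász-type covering lemma, a $k$-tuple of near-independent Borel sets covering each vertex exactly $k$ times with $n/k \leq 1/\alpha + \varepsilon$; finally one cleans up the finitely many "defect" vertices using hyperfiniteness-free Borel combinatorial tools (a Borel vertex coloring of a bounded-degree graph) to obtain a genuine Borel homomorphism to $K(n,k)$, possibly at the cost of increasing $n,k$ by a bounded factor that does not affect the ratio in the limit.

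The main obstacle — and the reason this is stated as a proposition rather than a triviality — is verifying that Bernshteyn's theorem, which is phrased for Schreier graphs of free Bernoulli actions $S(\Gamma,S,2^\omega)$, genuinely transfers to $S(T_d)$, whose vertices are \emph{unlabeled-root isomorphism classes} of $2^\omega$-labeled rooted trees rather than group elements. The cleanest route is to exhibit a Borel isomorphism (or at least a Borel homomorphism in each direction that is injective on components and measure-preserving up to null sets) between $S(T_d)$ and a graph of the form $S(\Z_2^{*d}, S_d, Y)$ for an appropriate standard Borel base $Y$, so that universality and Bernshteyn's equality pass through. One must be careful that quotienting by tree-automorphisms is harmless: generically a $2^\omega$-labeling has no nontrivial automorphism, so on a $\mu$-conull, Borel-invariant set $S(T_d)$ literally is $S(\Z_2^{*d},S_d,2^\omega)$ modulo a Borel isomorphism, and both $\chi^*_B$ and $\alpha_\mu$ ignore the null complement. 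Once that identification is in place, the equality is exactly Theorem \ref{thm:meas_ind_eq}.

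Finally I would remark that the same proof gives the analogous statement for $S(T_d)$ replaced by any Borel $d$-regular forest that is "generic" in the relevant sense, but we only need the tree case for Corollary \ref{cor:cv_chromatic}-style applications, so I would keep the statement as is.
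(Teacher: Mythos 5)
Your first route---citing Bernshteyn's theorem via a Borel isomorphism (even up to a null set) between $S(T_d)$ and $S(\Z_2^{*d},S_d,2^\omega)$---does not work, and the obstruction is precisely the distinction the paper flags when introducing $S(T_d)$: the Schreier graph carries a canonical Borel proper $d$-edge-coloring coming from the generators, but $S(T_d)$ does not. Any Borel graph isomorphism from $S(T_d)$ onto an induced subgraph of some $S(\Z_2^{*d},S_d,Y)$ would pull the generator-labeling back to a Borel proper $d$-edge-coloring of $S(T_d)$; by the universality of $S(T_d)$, this would transfer to every Borel $d$-regular forest, contradicting Marks's determinacy lower bound on the Borel edge-chromatic number. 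The issue is not (as you suggest) automorphisms of a generic labeled tree but the absence of any canonical way to recover a $\Z_2^{*d}$-coordinatization from an unlabeled $d$-regular tree. Discarding a null set does not rescue this, because your assertion that ``$\chi^*_B$ ignores the null complement'' is simply false: $\chi^*_B$ is a purely Borel invariant, with no measure in its definition, and the entire content of the proposition is that this Borel quantity happens to coincide with a measure-theoretic one. Even given a conull Borel graph isomorphism (whose existence would amount to an fiid proper $d$-edge-coloring of $T_d$, itself a nontrivial question), you would only bound $\chi^*_B$ of a restriction, which is weaker.

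Your second route is the right idea, and is what the paper means by ``it is not hard to check.'' The reason Bernshteyn's argument transfers is that his construction of the Borel homomorphism to $K(n,k)$ is \emph{local}: as the paper notes at the start of Section 3, it factors through a homomorphism of the type built in the proof of Theorem \ref{thm:easy_girth}, sending a vertex to (the isomorphism class of) a proper coloring of its $g$-ball, obtained from a Borel coloring of the bounded-degree power graph $G^{\le 2g}$ via \cite{kst}. Such a construction never touches the Schreier edge-labeling, so it applies verbatim to $S(T_d)$ and indeed to any Borel $d$-regular forest. Your specific sketch of re-running the argument is too vague to verify as written (for instance, the ``defect'' set is a Borel set of vertices, not finitely many, and the cleanup is a genuine ingredient), but the outline---approximate the measurable independent set by a local one, then combine shifts of it---points in the right direction.
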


A lot of effort has gone into computing $\alpha_\mu(S(T_d))$, the fiid independence number of $T_d$, both asymptotically and for small values. We will only need the latter.

\begin{theorem}[\cite{csoka2015invariant,mckay1987ind}]\label{thm:fiid_ind_numbers}
 $0.4361 \leq \alpha_\mu(S(T_3)) \leq 0.45537$
\end{theorem}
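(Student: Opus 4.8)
The two bounds are of different character and I would establish them separately.

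For the lower bound $\alpha_\mu(S(T_3)) \geq 0.4361$, the plan is to exhibit an explicit fiid independent set of $T_3$ of at least this density, as in \cite{csoka2015invariant}. The standard template is a level set of an invariant Gaussian field: using the iid $2^\omega$-labels one manufactures, as an $L^2$-limit of finite linear combinations of (functions of) the labels, a centered Gaussian process $(\eta_v)_{v \in V(T_3)}$ whose covariances are invariant under $\mathrm{Aut}(T_3)$, and one sets $I = \{v : \eta_v > t\}$. Requiring $I$ to be independent forces the neighbor covariance to be sufficiently negative; subject to that constraint one optimizes the covariance profile and the threshold $t$ so as to maximize $\mathbb{P}(\eta_o > t)$, which is exactly the density of $I$. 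A little more can be extracted by post-processing $I$ with a bounded-radius rule --- for instance adjoining the vertices that end up with no neighbor in $I$, using fresh iid bits to resolve conflicts between such vertices consistently --- and $0.4361$ records the density that this combined construction attains. Verifying the claim comes down to three routine points: that the process is genuinely a factor of iid, that $I$ is independent, and a Gaussian-tail density computation incorporating the post-processing gain.

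For the upper bound $\alpha_\mu(S(T_3)) \leq 0.45537$, the plan is to transfer the problem to large finite cubic graphs and invoke McKay's independence-ratio estimate \cite{mckay1987ind}. An fiid independent set is an $L^1$-limit of block factors, i.e.\ of functions of the isomorphism type of the radius-$r$ labeled ball around a vertex. So, given an fiid independent set of density $\rho$, fix $\epsilon > 0$ and choose a block factor $\phi_r$ of density $> \rho - \epsilon$ such that, with $\mu$-probability $< \epsilon$, the root lies in the output of $\phi_r$ and has a neighbor in that output. Now take a cubic graph $G$ on $n$ vertices of girth greater than $2r+1$ (such graphs exist for all large $n$), put iid $2^\omega$-labels on its vertices, and apply $\phi_r$ everywhere; since every radius-$r$ ball in $G$ is a tree, the local statistics coincide with those on $T_3$, so the output set has expected size $> (\rho - \epsilon)n$ and in expectation fewer than $\epsilon n$ of its vertices have a neighbor in it. Deleting those vertices leaves a genuine independent set of $G$ of expected size $> (\rho - 2\epsilon)n$, so one of at least that size exists; but $\alpha(G)/n \leq 0.45537 + o_n(1)$ by \cite{mckay1987ind}. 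Letting $n \to \infty$ and then $\epsilon \to 0$ gives $\rho \leq 0.45537$. (If McKay's estimate is only available for random cubic graphs, run $\phi_r$ on a random $G_n$ and compare expectations, using that $G_n$ is locally tree-like.)

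I expect the lower bound to be the main obstacle. The upper bound is essentially bookkeeping sitting on top of McKay's delicate second-moment analysis, whereas the lower bound requires designing a local construction that provably reaches density $0.4361$ and pushing through the Gaussian integrals and recursive distributional identities that certify its independence and its density simultaneously. A more self-contained upper bound could be sought via an entropy or interpolation argument applied directly to invariant independent-set-valued processes on $T_3$, but the reduction to finite graphs together with \cite{mckay1987ind} is the shortest route given the cited toolkit. Finally, note that Proposition \ref{prop:meas_ind_eq_Td} plays no role in this theorem; it becomes relevant only when these independence-ratio bounds are turned into bounds on $\chi^*_B(S(T_3))$.
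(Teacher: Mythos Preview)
The paper does not prove this theorem; it is quoted with citations to \cite{csoka2015invariant} and \cite{mckay1987ind} and the constants are used as black boxes in the arguments that follow. Your outline is a fair summary of how those references go, with one caveat on the lower bound: a level set $\{v:\eta_v>t\}$ of a nondegenerate Gaussian process on $T_3$ is never literally an independent set, so the construction cannot be exactly as you wrote it. The device in \cite{csoka2015invariant} is rather to take the set of strict local maxima of a Gaussian wave function (an eigenprocess of the adjacency operator), which is automatically independent, and then to augment by a local rule; the optimization is over the spectral parameter rather than over an arbitrary covariance profile subject to an ``independence constraint''. Your upper-bound reduction via block-factor approximation and transfer to finite cubic graphs of large girth, followed by McKay's bound, is the standard way to read \cite{mckay1987ind} in this context and is correct.
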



\subsection{Chromatic numbers of Kneser graphs}\label{sec:k_chromatic}

To understand the relationship between fractional and ordinary colorings, it is important to compute the chromatic numbers of the Kneser graphs. This turns out to be nontrivial, and was first done by Lov\'asz using topological methods.

\begin{theorem}[\cite{lovasz1978kneser}]\label{thm:Lovasz}
    Let $k,n \in \N$ with $n \geq 2k$. $\chi(K(n,k)) = n - 2k + 2.$
\end{theorem}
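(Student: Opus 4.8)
The plan is to prove the two inequalities $\chi(K(n,k)) \le n-2k+2$ and $\chi(K(n,k)) \ge n-2k+2$ separately. The upper bound is elementary. Writing the ground set as $\{0,1,\ldots,n-1\}$ and putting $d = n-2k$, color a $k$-set $A$ by $c(A) = \min(\min A,\, d+1) \in \{0,1,\ldots,d+1\}$. Two $k$-sets receiving a common color $i \le d$ both contain the element $i$, hence are non-adjacent; two $k$-sets receiving color $d+1$ are both subsets of $\{d+1,\ldots,n-1\}$, a set of size $2k-1$, hence meet by pigeonhole. So $c$ is a proper coloring with $d+2 = n-2k+2$ colors.

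The real content is the lower bound, and I would prove it by B\'ar\'any's topological argument (the case $n=2k$, where $K(n,k)$ is a perfect matching and both sides equal $2$, is trivial, so assume $n > 2k$ and put $d = n-2k \ge 1$). Suppose toward a contradiction that $K(n,k)$ has a proper coloring with the colors $1,\ldots,d+1 = n-2k+1$. By Gale's lemma there is a set $X$ of $n$ points on the sphere $S^d$ such that every open hemisphere of $S^d$ contains at least $k$ points of $X$; identify the vertices of $K(n,k)$ with the $k$-subsets of $X$. For each color $i$ let $A_i \subseteq S^d$ be the set of $y$ such that the open hemisphere $H(y) = \{z \in S^d : \langle y,z\rangle > 0\}$ contains a $k$-subset of $X$ of color $i$. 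Each $A_i$ is open, since a finite set lying in $H(y)$ also lies in $H(y')$ for all $y'$ near $y$; and $A_1,\ldots,A_{d+1}$ cover $S^d$, since for every $y$ the hemisphere $H(y)$ contains at least $k$ points of $X$, hence a $k$-subset, which has some color. Thus $S^d$ is covered by $d+1$ open sets, so by the Lyusternik--Schnirelmann--Borsuk theorem (a standard consequence of the Borsuk--Ulam theorem) some $A_i$ contains a pair of antipodal points $y, -y$. Then $H(y)$ and $H(-y)$ each contain a $k$-subset of $X$ of color $i$; but $H(y)\cap H(-y) = \emptyset$, so these two $k$-subsets are disjoint, hence adjacent in $K(n,k)$ although equally colored --- contradicting properness. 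So no proper $(n-2k+1)$-coloring exists, giving $\chi(K(n,k)) \ge n-2k+2$.

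The main obstacle is Gale's lemma itself --- the existence of $n$ points on $S^{n-2k}$ whose every open hemisphere captures at least $k$ of them --- which is the geometric heart of the argument. I would prove it by the standard explicit construction: take $v_j = (-1)^j\gamma(j)$ for $j = 1,\ldots,n$, where $\gamma(t) = (1,t,t^2,\ldots,t^{n-2k})$ is the moment curve in $\mathbb{R}^{n-2k+1}$ normalized to $S^{n-2k}$, and observe that a linear functional restricted to this curve is a polynomial of degree $\le n-2k$, so it changes sign at most $n-2k$ times along $v_1,\ldots,v_n$; a short count on the constant-sign runs then forces at least $k$ of the $v_j$ into each open hemisphere. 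Two alternatives worth noting: Lov\'asz's original route computes instead the connectivity of the neighborhood complex $\mathcal{N}(K(n,k))$, showing it is $(n-2k)$-connected --- the bulk of the work --- and then invokes the bound $\chi(G) \ge \mathrm{conn}(\mathcal{N}(G)) + 3$; and one can keep the B\'ar\'any skeleton but substitute Tucker's lemma for the Lyusternik--Schnirelmann--Borsuk theorem, obtaining an essentially combinatorial proof in the style of Matou\v{s}ek.
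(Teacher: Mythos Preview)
Your argument is correct: the upper bound is the standard greedy coloring, and the lower bound is precisely B\'ar\'any's proof via Gale's lemma and the Lyusternik--Schnirelmann--Borsuk covering theorem, with an accurate sketch of why the alternating moment-curve configuration witnesses Gale's lemma. There is nothing to compare against, though: the paper does not prove this theorem at all, merely stating it with a citation to Lov\'asz and using it as a black box, so you have supplied a full proof where the author deliberately outsourced one.
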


On the other hand, unsurprisingly (though still not trivially), $\chi^*(K(n,k)) = n/k$ \cite{erdos_ko_rado}.

We now have enough information to give our alternate proof of Corollary \ref{cor:cv_chromatic} and our positive answer to Question \ref{q:cv_triangle}. 

\begin{proof}[Proof of Corollary \ref{cor:cv_chromatic}]
    By Theorem \ref{thm:fiid_ind_numbers} and Lemma \ref{lem:meas_ind_leq}, $$\chi_{\mu}^*(S(T_d)) \geq 1/0.45537 > 2 + \frac{1}{6}.$$ 
    Therefore $T_d$ has no FIID homomorphism to $K(2k + \frac{k}{6},k)$ for any $k \in 6\N$. By Theorem \ref{thm:Lovasz} the chromatic number of this graph is $2 + \frac{k}{6}$, so since $k$ is arbitrary we are done. 
\end{proof}

\begin{proof}[Proof of positive answer to Question \ref{q:cv_triangle}]
    By Theorem \ref{thm:fiid_ind_numbers} and Proposition \ref{prop:meas_ind_eq_Td}, $$\chi_B^*(S(T_d)) \leq 1/0.4361 < 2 + \frac{1}{3}.$$ 
    Therefore there exists $k \in 3\N$ such that $S(T_d)$ admits a Borel homomorphism to $K(2k + \frac{k}{3},k)$. Clearly $K(n,k)$ is triangle free iff $n < 3k$, so we are done. 
\end{proof}

The observation about when $K(n,k)$ contains a triangle easily extends to the following, which we will need shortly. When $n = 2k$ $K(n,k)$ is a matching, so we further restrict our attention to the case $n > 2k$. 

\begin{lemma}[{\cite[Corollary 2.11]{poljak1987maximum}}]\label{lem:kneser_odd_girth}
    Let $n,k \in \N$ with $n > 2k$. The odd girth of $K(n,k)$ is 
    \[ 1 + 2 \left\lceil \frac{k}{n - 2k} \right\rceil \].
\end{lemma}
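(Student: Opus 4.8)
The plan is to establish the two inequalities separately; throughout write $\delta := n-2k$ (a positive integer, since $n > 2k$) and $m := \lceil k/\delta\rceil$, so that the assertion is that the odd girth of $K(n,k)$ equals $2m+1$. The one arithmetical identity behind everything is that for a positive integer $\ell$ the condition $\ell\delta \ge k$ is equivalent to $\ell n \ge (2\ell+1)k$, i.e.\ to $\tfrac{n}{k} \ge 2 + \tfrac{1}{\ell}$, and that $m$ is by definition the least $\ell$ satisfying it. So it suffices to show that $K(n,k)$ contains an odd closed walk of length $2\ell+1$ exactly when $\ell n \ge (2\ell+1)k$: since any odd closed walk contains an odd cycle of no greater length, and conversely an odd cycle of length $g$ can be padded with a back-and-forth traversal to a closed walk of any larger odd length, this equivalence pins down the odd girth.

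For the lower bound I would argue as follows. Suppose $A_0, A_1, \dots, A_{2\ell}$ is an odd closed walk of length $2\ell+1$ in $K(n,k)$ (indices mod $2\ell+1$), so each $A_i$ is a $k$-subset of the ground set and $A_i \cap A_{i+1} = \emptyset$. For a ground-set element $j$, the index set $S_j := \{i : j \in A_i\}$ contains no two cyclically consecutive indices, hence is an independent set in the $(2\ell+1)$-cycle, hence $|S_j| \le \ell$. Summing the identity $\sum_j |S_j| = \sum_i |A_i|$ gives $(2\ell+1)k \le \ell n$, as wanted. Alternatively one can deduce this from monotonicity of $\chi^*$ under homomorphisms: such a walk is a homomorphism $C_{2\ell+1} \to K(n,k)$, so $2 + \tfrac{1}{\ell} = \chi^*(C_{2\ell+1}) \le \chi^*(K(n,k)) = \tfrac{n}{k}$, the first equality being a case of Lemma~\ref{lem:fractional_chrom_vs_ind} (odd cycles are vertex transitive with independence ratio $\ell/(2\ell+1)$) and the last the Erd\H{o}s--Ko--Rado value.

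For the upper bound I will produce, when $mn \ge (2m+1)k$, a closed walk of length $2m+1$ by a balanced blow-up of the canonical odd cycle of $m$-arcs in $\mathbb{Z}/(2m+1)$. Partition the ground set as $\bigsqcup_{e=0}^{2m} B_e$ with the most balanced sizes $|B_e| = \lfloor (e+1)n/(2m+1)\rfloor - \lfloor en/(2m+1)\rfloor$, and for $i \in \mathbb{Z}/(2m+1)$ set $J_i := \{(m+1)i, (m+1)i+1, \dots, (m+1)i+m-1\} \subseteq \mathbb{Z}/(2m+1)$. Two facts do the work: (i) $J_i$ and $J_{i+1}$ are disjoint, since their union is the $2m$ consecutive residues from $(m+1)i$ to $(m+1)i+2m$ with the middle one $(m+1)i+m$ deleted; and (ii) each $J_i$ is an $m$-term arc, and super-additivity of the floor function ($\lfloor x+y\rfloor \ge \lfloor x\rfloor + \lfloor y\rfloor$) shows that the sum of $|B_e|$ over any $m$-term arc is at least $\lfloor mn/(2m+1)\rfloor$, which is $\ge k$ because $mn \ge (2m+1)k$ and $k$ is an integer (the wrap-around case reduces to the same expression using $\sum_e |B_e| = n$). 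Now pick $A_i \subseteq \bigcup_{e \in J_i} B_e$ of size exactly $k$; by (i) consecutive $A_i$ are disjoint, so $A_0, A_1, \dots, A_{2m}, A_0$ is the desired closed walk. (Since $\gcd(m+1, 2m+1) = 1$ the $J_i$ are in fact pairwise distinct, giving an embedded $C_{2m+1}$, but this is not needed.)

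The step I expect to be the crux is the upper bound, and specifically observation (ii): blowing up by blocks of equal size would force $m \mid k$ and, worse, would overshoot the budget $n$ once $n/k$ only barely exceeds $2 + \tfrac{1}{m}$; the balanced choice of block sizes is exactly what makes every length-$m$ window reach the threshold $k$ while the total stays at $n$. The remaining ingredients — that $C_{2\ell+1}$ has independence number $\ell$, the $\gcd$ and arc computations, and the reduction between odd closed walks and odd girth — are routine. As a sanity check one should verify the extreme cases, e.g.\ $n = 2k+1$ gives $m = k$ and predicts odd girth $2k+1$, consistent with $K(3,1) = K_3$ and $K(5,2)$ being the Petersen graph, while $n = 3k$ gives $m = 1$, consistent with $K(n,k)$ having a triangle iff $n \ge 3k$; this also confirms that the hypothesis $n > 2k$ (so that $\delta \ne 0$) is exactly the right one.
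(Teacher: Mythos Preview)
The paper does not give its own proof of this lemma; it simply states the result with a citation to \cite{poljak1987maximum}. Your argument is a correct self-contained proof: the double-counting lower bound (each ground element lies in an independent set of $C_{2\ell+1}$, hence in at most $\ell$ of the $A_i$) cleanly gives $(2\ell+1)k \le \ell n$, and the balanced blow-up of the canonical $(2m+1)$-cycle of $m$-arcs furnishes the matching closed walk once $mn \ge (2m+1)k$. The only place worth a second look is the wrap-around case of (ii), but your reduction via $\sum_e |B_e| = n$ to bounding a non-wrapping $(m{+}1)$-arc by $\lceil (m{+}1)n/(2m{+}1)\rceil = n - \lfloor mn/(2m{+}1)\rfloor \le n - k$ goes through. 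So there is nothing to compare against here; your proof stands on its own.
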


On the other hand $K(n,k)$ contains a 4-cycle whenever $n \geq 2k+2$. 

When $n \sim (2+\epsilon)k$ for $\epsilon > 0$ small and $k$ large, the above results say that $K(n,k)$ has both large odd girth and large chromatic number. Unfortunately $K(n,k)$ has too many vertices, (around $2^n$) to deduce Corollary \ref{cor:hyperfinite_chromatic} from Theorem \ref{thm:hyperfinite_girth}.
Fortunately, it is known that there are \textit{critical subgraphs} of $K(n,k)$, i.e, minimal subgraphs with the same chromatic number, with much fewer vertices. 

\begin{definition}[\cite{schrijver_graph}]
    Let $n,k \in \N$ with $n \geq 2k$. The \textit{Schrijver graph} $K'(n,k)$ is the induced subgraph of $K(n,k)$ on the set of sets $A \in \binom{n}{k}$ with the property that no two elements of $A$ have a difference of 1 mod $n$.     
\end{definition}

\begin{theorem}[\cite{schrijver_graph}]\label{thm:schrijver_chromatic}
    $\chi(K'(n,k)) = \chi(K(n,k))$ $(= n-2k+2)$
\end{theorem}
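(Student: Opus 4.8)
The plan is to prove the two inequalities $\chi(K'(n,k)) \le n-2k+2$ and $\chi(K'(n,k)) \ge n-2k+2$ separately. The upper bound is immediate: $K'(n,k)$ is by definition an induced subgraph of $K(n,k)$, so $\chi(K'(n,k)) \le \chi(K(n,k))$, and the latter equals $n-2k+2$ by Lov\'asz's theorem (Theorem~\ref{thm:Lovasz}). All the content is in the reverse inequality $\chi(K'(n,k)) \ge n-2k+2$, which I would establish by a Borsuk--Ulam argument refining Lov\'asz's (following the approach of B\'ar\'any and of Schrijver).

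Write $[n] = \{1,\dots,n\}$ for the ground set, and call a subset \emph{stable} if it contains no two cyclically consecutive elements of $[n]$; thus the vertices of $K'(n,k)$ are exactly the stable $k$-subsets of $[n]$. Set $d := n-2k$ and suppose toward a contradiction that $c$ properly colors $K'(n,k)$ with colors $\{1,\dots,d+1\}$. The combinatorial input I would use is a strengthening of Gale's lemma: there is a configuration $x_1,\dots,x_n \in \mathbb{S}^d$ such that for every $y \in \mathbb{S}^d$, the index set $H(y) := \{\, i : \langle x_i,y\rangle > 0 \,\}$ of the open hemisphere centred at $y$ contains (the indices of) some stable $k$-subset of $[n]$. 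This is the step I expect to be the main obstacle, since it is precisely where Schrijver's sharpening over the plain Kneser theorem lives. I would prove it by placing the $x_i$ along a suitable curve on $\mathbb{S}^d$ obtained from a perturbation of the moment curve and invoking Gale's evenness condition to control which index sets occur as $H(y)$; evenness forces the indices inside any open hemisphere to be distributed so that a greedy selection produces $k$ of them, no two cyclically adjacent.

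Granting the strengthened Gale lemma, I would finish as follows. For each color $j \in \{1,\dots,d+1\}$ define
\[
A_j \;=\; \{\, y \in \mathbb{S}^d : \text{some stable } k\text{-set } B \subseteq H(y) \text{ satisfies } c(B) = j \,\} .
\]
Each $A_j$ is open, being a finite union over stable $k$-sets $B$ of color $j$ of the open sets $\{\, y : \langle x_i,y\rangle > 0 \text{ for all } i \in B \,\}$, and by the strengthened Gale lemma the $A_j$ cover $\mathbb{S}^d$. If some $A_j$ contained an antipodal pair $\pm y$, then, since $H(y)$ and $H(-y)$ are disjoint, the associated stable $k$-sets $B \subseteq H(y)$ and $B' \subseteq H(-y)$ would be disjoint, hence adjacent in $K'(n,k)$, contradicting $c(B) = c(B') = j$. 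So $\{A_1,\dots,A_{d+1}\}$ is an open cover of $\mathbb{S}^d$ by $d+1$ sets, none of which contains an antipodal pair, contradicting the Borsuk--Ulam theorem in its Lyusternik--Shnirel'man--Borsuk covering form. Hence $\chi(K'(n,k)) \ge d+2 = n-2k+2$, which together with the upper bound proves the theorem. (Alternatively, the Borsuk--Ulam step can be replaced by a discrete argument via the octahedral Tucker lemma, in the style of the combinatorial proofs due to Matou\v sek and to Ziegler.)
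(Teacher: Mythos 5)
The paper states this theorem as cited from Schrijver's original article and does not prove it, so there is no in-paper argument to compare your proposal against; I can only assess the sketch on its own terms.

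Your outline is a correct sketch of the standard topological proof. The upper bound is immediate as you say: $K'(n,k)$ is an induced subgraph of $K(n,k)$, so $\chi(K'(n,k)) \le \chi(K(n,k)) = n-2k+2$ by Lov\'asz. The lower bound via the B\'ar\'any-style covering argument is also sound: assuming the strengthened Gale lemma (placing $n$ points on $\mathbb{S}^{n-2k}$ so that every open hemisphere contains a stable $k$-subset), the sets $A_j$ are open, they cover the sphere, and none contains an antipodal pair because $H(y) \cap H(-y) = \emptyset$ forces the corresponding stable sets to be disjoint and hence adjacent in $K'(n,k)$, which a proper coloring forbids; the Lyusternik--Schnirelmann--Borsuk form of Borsuk--Ulam then yields the contradiction. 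All of the genuinely new content over Lov\'asz's theorem is concentrated in the strengthened Gale lemma, which you flag but only gesture at: the usual construction takes $w_i = (-1)^i\gamma(t_i)$ along the moment curve $\gamma$ in $\mathbb{R}^{n-2k+1}$ and counts sign changes of the degree-$\le(n-2k)$ polynomial $t \mapsto \langle \gamma(t), y\rangle$, but one must be careful that the resulting $k$-subset is \emph{cyclically} stable (in particular, that $\{1,n\}$ is excluded), which typically needs a perturbation or a more careful argument than the naive moment-curve version. To make this a complete proof you would need to actually prove that lemma; the rest of the plan holds up.
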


Finally we will need the following computation

\begin{lemma}\label{lem:schrijver_cardinality}
    Fix $m \in \N$. $|V(K'(2k+m,k))| = O(k^{m})$.
\end{lemma}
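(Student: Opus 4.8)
The plan is to forget the graph structure entirely: the statement is purely an enumeration. Identify the ground set $\{0,1,\dots,n-1\}$ with $\Z/n\Z$; then a $k$-subset $A$ has ``no two elements differing by $1$ mod $n$'' exactly when $A$ is an independent set in the cycle $C_n$. So with $n = 2k+m$, proving the lemma amounts to showing $C_{2k+m}$ has only $O(k^m)$ independent sets of size $k$, and I would count these directly.

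To do so, encode a stable $k$-subset $A = \{a_1 < a_2 < \dots < a_k\} \subseteq \{0,\dots,n-1\}$ by its cyclic gaps $g_i := a_{i+1}-a_i$ for $1 \le i < k$ together with the wrap-around gap $g_k := n - a_k + a_1$. Stability, \emph{including} the ``mod $n$'' condition that makes $g_k \ge 2$, forces every $g_i \ge 2$, and visibly $\sum_{i=1}^k g_i = n$; writing $g_i = 2 + h_i$ gives $h_i \ge 0$ with $\sum_{i=1}^k h_i = n-2k = m$. Now the pair $\bigl(a_1,(g_1,\dots,g_{k-1})\bigr)$ determines $A$, the gap $g_k$ is then forced, and once all gaps are fixed the constraints $0 \le a_1$ and $a_k \le n-1$ pin $a_1$ to one of exactly $g_k \le m+2$ values. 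Since each stable $k$-subset arises exactly once this way, we get
\[ |V(K'(2k+m,k))| \;\le\; (m+2)\cdot\#\{(h_1,\dots,h_k): h_i\ge 0,\ \textstyle\sum_i h_i = m\} \;=\; (m+2)\binom{k+m-1}{m}, \]
and $\binom{k+m-1}{m} = \tfrac{1}{m!}\,k(k+1)\cdots(k+m-1)$ is a polynomial in $k$ of degree $m$, hence $O(k^m)$ for fixed $m$.

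If one prefers an exact count to this inequality, one can instead invoke the classical formula (Kaplansky's lemma; see \cite{scheinerman_ullman}) that $C_n$ has $\tfrac{n}{n-k}\binom{n-k}{k}$ independent sets of size $k$, which at $n=2k+m$ reads $\tfrac{2k+m}{k+m}\binom{k+m}{m}$ with prefactor in $[1,2]$; the asymptotics are identical. The argument is entirely elementary, and the only place calling for any care is ensuring the wrap-around gap $g_k$ is constrained to be $\ge 2$ and that the bookkeeping of the starting vertex $a_1$ does not introduce a spurious factor of $n$ — that is exactly what keeps the bound at $O(k^m)$ rather than $O(k^{m+1})$.
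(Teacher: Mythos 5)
Your proof is correct and follows essentially the same gap-sequence-plus-stars-and-bars argument as the paper; the only difference is that you bound the number of admissible starting elements directly by $g_k\le m+2$ once the gaps are fixed, rather than counting all $n$ choices and dividing by $k$ as the paper does, which gives the slightly weaker but equally good prefactor $m+2$ in place of $\tfrac{2k+m}{k}$. Both versions land on the same binomial coefficient $\binom{k+m-1}{m}=O(k^m)$ and the reasoning is sound.
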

\begin{proof}
    In general, $|V(K'(n,k))| = \frac{n}{k} \binom{n-k-1}{k-1}$. This is well known, but for instance, one can specify a vertex $A$ by choosing a ``starting element'' $x \in n$, then writing the sequence in $\N^{k-1}$ of lengths of gaps between each pair of consecutive elements of $A$ starting from $x$. Each gap must have positive length and the lengths must add to at most $n-k-1$, so stars and bars show that there are $\binom{n-k-1}{k-1}$ such sequences. Finally, any element of $A$ can be the starting element, so we have overcounted by a factor of $k$.
    
    Now 
    \begin{gather*}
        \frac{2k + m}{k} \binom{k+m-1}{k-1} = O(1) \binom{k+m-1}{m} \leq O(1)(k+m)^m = O(k^m).
    \end{gather*}
\end{proof}

\begin{proof}[Proof of Corollary \ref{cor:hyperfinite_chromatic} from Theorem \ref{thm:hyperfinite_girth}]
    Fix $d,m \in \N$ with $d > 2$. We will take $H = K'(2k+m,k)$ for some large $k$. This always has chromatic number $m+2$ by Theorem \ref{thm:schrijver_chromatic}, so since $m$ is arbitrary we will be done if we can check the hypotheses of Theorem \ref{thm:hyperfinite_girth}. 

    Let $l = (k/m) - 1$, so that by Lemma \ref{lem:kneser_odd_girth} $K(2k+m,k)$, and therefore $K'(2k+m,k)$, has odd girth greater than $2l+1$. Now $|V(K'(2k+m,k))|$ is at most polynomial in $k$ by Lemma \ref{lem:schrijver_cardinality}, so for large enough $k$ it will be less than $(d-1)^{k/m} \leq d \cdot (d-1)^l$, as desired. 
    Note the use of $d > 2$ here. 
\end{proof}

\section{Proof of Theorem \ref{thm:easy_girth}}

In this short section we prove Theorem \ref{thm:easy_girth}. The proof is already implicit in Bernshteyn's proof of Theorem \ref{thm:meas_ind_eq} \cite{bernshteyn_fractional} (recall that we used this in our answer to Question \ref{q:cv_triangle}), where the homomorphism to the Kneser graph factors through the sort of homomorphism we will describe. In fact, Bernshteyn and independently Seward have shown that all continuous homomorphisms from $S(T_d)$ to finite graphs factor through these sorts of homomorphisms \cite{bernshteyn2023cts}. 

Fix $d$ and $g > 0$. Let $o \in V(T_d)$ and $T = T_d \res B(o,g)$. Let $N = d^{2g}$. 
$H$ will a graph whose vertex set is the set of injective functions $B(o,g) \to N$ modulo automorphisms of $T$. We think of these as labelings of $T$. Two such labelings $f_0$ and $f_1$ will be adjacent iff there is a labeing $F : V(T) \to N$ and adjacent vertices $(v_0,v_1) \in E(T_d)$ so that for each $i \in 2$, $(T_d \res B(v_i,g), F \res B(v_i,g)) \cong (T,f_i)$.
There is an alternate description of the edge set which may be helpful: Identifying $N$ with some set of $N$ points in $2^\omega$, call a \textit{lift} of $f \in H_i$ a labeling $x \in F(T_d)$ which extends $f$. Then $f_0,f_1 \in V(H)$ are adjacent iff they have lifts which are adjacent in $S(T_d)$. 

\begin{claim}
$H$ has odd girth $> g$. 
\end{claim}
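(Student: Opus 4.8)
The plan is to show that any closed walk in $H$ of odd length $\le g$ lifts to a closed walk in $S(T_d)$ of the same length, and then derive a contradiction from the fact that $S(T_d)$ — being a $d$-regular forest — has no odd cycles at all, in particular no short closed odd walks. The graph $H$ was built precisely so that adjacency is witnessed by ``lifts'' that agree on overlapping balls, so the work is in showing these local lifts can be patched together along a short walk.

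**First I would** set up the lifting carefully. Suppose $f_0, f_1, \ldots, f_{m-1}, f_m = f_0$ is a closed walk in $H$ with $m \le g$ odd. For each consecutive pair $f_j, f_{j+1}$, by definition of $E(H)$ there is a labeling $F_j : V(T) \to N$ and an edge $(v_j^0, v_j^1) \in E(T_d)$ with $(T_d\res B(v_j^i,g), F_j \res B(v_j^i,g)) \cong (T, f_{j+i})$ for $i \in 2$. The key point: because $m \le g$, I can embed the relevant combinatorial data into a single copy of $T_d$. Concretely, fix a path $p_0, p_1, \ldots, p_m = p_0$ — wait, this is a closed walk of odd length in a tree, so it cannot literally close up; instead I will track an actual (non-closed) walk in $T_d$. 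Let me instead choose vertices $u_0, u_1, \ldots, u_m$ in $T_d$ forming a walk (consecutive ones adjacent, using that $m \le g$ so everything stays inside a ball of radius $g$ around $u_0$), and inductively build a single labeling $x : B(u_0, 2g) \to 2^\omega$ (or on all of the relevant finite region) such that $(T_d \res B(u_j, g), x\res B(u_j,g)) \cong (T, f_j)$ for every $j$. The induction step uses the edge-adjacency witness $F_j$: having defined $x$ on $B(u_j,g)$ compatibly with $f_j$, the isomorphism type forces the labels on $B(u_j,g) \cap B(u_{j+1},g)$, and $F_j$ tells me how to extend consistently to $B(u_{j+1},g)$ realizing $f_{j+1}$; since the $u_j$ lie along a geodesic-like walk and $m \le g$, the balls overlap in exactly the way that makes the amalgamation well-defined (this is where acyclicity of $T_d$ and the radius bookkeeping matter). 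Extend $x$ arbitrarily to a full element of $F(T_d)$.

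**Then** the walk $x = (T_d, u_0, x), (T_d, u_1, x), \ldots, (T_d, u_m, x)$ in $S(T_d)$ — reading each $(T_d, u_j, x)$ as the rooted labeled tree based at $u_j$ — is by construction a walk whose $j$-th vertex equals $f_j$ as an element of $V(H)$ (i.e. the isomorphism class is right). Consecutive terms are adjacent in $S(T_d)$ since $u_j$ and $u_{j+1}$ are adjacent in $T_d$. So we have produced a closed walk of length $m$ in $S(T_d)$ based at the vertex $(T_d,u_0,x)$: it starts and ends at the point representing $f_0$. But $S(T_d)$ restricted to the component of $(T_d,u_0,x)$ is a $d$-regular tree, hence bipartite, hence contains no closed walk of odd length. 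Wait — I must make sure the walk in $S(T_d)$ genuinely closes up, not merely that its endpoints have the same $H$-image. The walk $(T_d,u_j,x)$ lives in a tree whose vertices are rooted labelings, and $u_0, u_m$ are distinct vertices of $T_d$, so $(T_d,u_0,x)$ and $(T_d,u_m,x)$ could be distinct points of $S(T_d)$ even if they map to the same $f_0$. So the cleaner route is: this is a \emph{non-closed} walk of odd length $m$ in $S(T_d)$ between two vertices $y_0 = (T_d,u_0,x)$ and $y_m = (T_d,u_m,x)$ lying in the same component (a $d$-regular tree), hence $y_0$ and $y_m$ are at odd distance in that tree, in particular $y_0 \ne y_m$. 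That does not immediately contradict anything — so I actually want the stronger statement that $H$-distance detects parity in a way that forces $y_0, y_m$ to the \emph{same} bipartition class. Indeed, a closed odd walk $f_0 \to \cdots \to f_0$ in $H$ would give, after lifting, a walk of odd length in the tree $S(T_d)$ starting at $y_0$; but I should choose the $u_j$ so that the walk \emph{does} close, i.e. $u_m = u_0$, which is impossible for odd $m$ in a tree — and that impossibility \emph{is} the contradiction. So: the nonexistence of a closed odd walk of length $\le g$ in $H$ follows because such a walk would lift to a closed odd walk of the same length in the forest $S(T_d)$, contradicting acyclicity (bipartiteness).

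**The main obstacle** is the amalgamation in the induction: ensuring that the finitely many local witnesses $F_j$, each only an isomorphism \emph{type} on a ball, can be simultaneously realized by one honest labeling $x$ of a single subtree of $T_d$ along a chosen walk $u_0, \ldots, u_m$. The constraint $m \le g$ (here the odd girth bound $>g$, so we assume $m \le g$) is exactly what guarantees the balls $B(u_j, g)$ overlap enough that the isomorphism types pin down a consistent labeling, while the tree structure guarantees there is no ``cycle of constraints'' that could be inconsistent. One must be careful that the walk $u_0,\ldots,u_m$ in $T_d$ can be taken without backtracking-induced collapse (if the $H$-walk backtracks, $u_{j+1}$ may equal $u_{j-1}$, which is fine and only helps), and that at the end the impossibility of $u_m = u_0$ for odd $m$ in a tree is what closes the argument. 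I expect this to be a few lines once the bookkeeping of ball radii is fixed, but it is the only place where anything needs checking.
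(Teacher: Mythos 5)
Your setup is sound and close in spirit to the paper's: take a closed odd walk $f_0,\ldots,f_m=f_0$ in $H$ of length $m\le g$, choose a walk $u_0,\ldots,u_m$ in $T_d$, and build a single labeling $x$ such that $(T_d\res B(u_j,g),x\res B(u_j,g))$ realizes $f_j$ for each $j$. That much is fine, and the amalgamation works because $m\le g$. But your last step does not close the argument. You correctly notice that in a tree an odd-length walk cannot return to its start, so $u_m\ne u_0$ and hence $(T_d,u_0,x)\ne(T_d,u_m,x)$; you then say ``I should choose the $u_j$ so that the walk does close, which is impossible, and that impossibility is the contradiction.'' That is a non-argument: the fact that you cannot \emph{choose} $u_m=u_0$ is not by itself contradictory, because the lift of a closed $H$-walk is under no a priori obligation to close in $T_d$. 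You acknowledge exactly this difficulty two sentences earlier (``That does not immediately contradict anything'') and then never actually resolve it.

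The missing ingredient is the injectivity of the labelings in $V(H)$, which is what the paper's proof leans on. Concretely: let $f_0(o)=0$. Under your construction, $x(u_0)=0$ because $x\res B(u_0,g)$ realizes $f_0$ with root $u_0\mapsto o$, and likewise $x(u_m)=0$ because $f_m=f_0$. Since $d(u_0,u_m)\le m\le g$, we have $u_m\in B(u_0,g)$. If $u_0\ne u_m$, then $x\res B(u_0,g)$ takes the value $0$ twice, contradicting the injectivity of $f_0$. So in fact $u_0=u_m$, and now the tree-parity observation (odd walk in a tree cannot close) gives the contradiction. The paper's proof is the streamlined version of this: rather than building a global $x$, it just tracks the vertex carrying the label $0$ inside the balls $(T,f_i)$, noting its distance from $o$ changes by $\pm1$ at each step, so after $l$ odd steps it cannot be $o$, contradicting injectivity of $f_0=f_l$. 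Either way, injectivity is the key point your proposal omits; without it the ``impossibility of closing up'' proves nothing.
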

\begin{proof}
    Let $l \leq g$ odd and suppose $f_0, \ldots, f_l = f_0$ is a cycle of length $l$ in $H$. WLOG $f_0(o) = 0$. By induction one can see that for each $i \leq l$, there is some $v_i \in B(o,g)$ with $f_i(v) = 0$, and, letting $d_i$ be the distance from $o$ to $v_i$, $d_{i+1} = d_i \pm 1$. In particular $d_i = i$ (mod 2). Since $l$ is odd, we then have $v_l \neq o$, but $f_l = f_0$, so this contradicts the injectivity of $f_0$. 
\end{proof}

In fact it is not hard to check that the odd girth of $H$ is precisely $2g + 1$. 

\begin{claim}
Let $G$ be a $d$-regular acyclic Borel graph. $G$ admits a Borel homomorphism to $H$. 
\end{claim}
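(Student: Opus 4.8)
The plan is to build the homomorphism directly from a Borel proper coloring of a suitable power of $G$, using nothing more than the Kechris--Solecki--Todorcevic theorem that a Borel graph of maximum degree $\Delta$ has a Borel proper $(\Delta+1)$-coloring. The picture behind this is that a homomorphism $c\colon G\to H$ is the same thing as a Borel assignment to each $v\in V(G)$ of an injective labeling $B_G(v,g)\to N$, up to automorphism of $(T,o)$ (legitimate since $G$ is $d$-regular and acyclic, so $(B_G(v,g),v)\cong(T,o)$), such that for every edge $(v,w)$ the chosen labelings of $B_G(v,g)$ and $B_G(w,g)$ have representatives fitting together into one function on $B_G(v,g)\cup B_G(w,g)$; this is exactly the definition of $E(H)$ transported along an isomorphism $B_G(v,g)\cup B_G(w,g)\cong B(v_0,g)\cup B(v_1,g)$ for an adjacent pair $v_0\sim v_1$ of $T_d$. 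Write $G^{\le m}$ for the (locally finite, hence Borel) graph on $V(G)$ joining distinct vertices at distance $\le m$; its maximum degree is $|B(o,m)|-1$. First I would record the arithmetic (for $d\ge 2$): $|B(o,2g)|\le d^{2g}=N$ for $g\ge 2$, while $|B(o,2)|=d^2+1=N+1$; and $d+1\le d^2$.

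For $g\ge 2$: since $\Delta(G^{\le 2g})+1=|B(o,2g)|\le N$, KST provides a Borel proper coloring $\ell\colon V(G)\to N$ of $G^{\le 2g}$, which is injective on every ball $B_G(v,g)$ (diameter $2g$). Set $c(v):=$ the class of $\ell\res B_G(v,g)$ in $V(H)$; this is Borel because $c^{-1}(f)$ is defined by a finite local isomorphism condition. Given an edge $(v,w)$, the finite subtree $B_G(v,g)\cup B_G(w,g)$ of $G$ embeds into $T_d$ carrying $v,w$ to some adjacent $v_0\sim v_1$, and transporting $\ell$ along this embedding produces precisely the labeling witnessing $(c(v),c(w))\in E(H)$. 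So $c$ is a Borel homomorphism.

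For $g=1$: now $N=d^2$ falls one short of the KST bound for properly coloring $G^{\le 2}$, so instead I would use that the automorphism group of the star $T=T_d\res B(o,1)$ is the full symmetric group on its $d$ leaves. Take a Borel proper coloring $\rho\colon V(G)\to N$ of $G$ itself ($d+1\le N$ colors). For each $v$, $\rho(N_G(v))$ has at most $d$ elements and misses $\rho(v)$, and $|N\setminus\{\rho(v)\}|=d^2-1\ge d$, so by a fixed Borel rule pick a $d$-set $L(v)$ with $\rho(N_G(v))\subseteq L(v)\subseteq N\setminus\{\rho(v)\}$; let $c(v)\in V(H)$ be the class of the star labeling $o\mapsto\rho(v)$, leaves bijectively onto $L(v)$ (well defined since any two such bijections differ by an automorphism; Borel since $c(v)$ depends only on $\rho(v)$ and the set $\rho(N_G(v))$). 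Unwinding the definition of $E(H)$ in this case, $f_0\sim f_1$ holds iff the center label of each lies in the leaf set of the other; and for an edge $(v,w)$, $\rho(w)\in\rho(N_G(v))\subseteq L(v)$ and $\rho(v)\in\rho(N_G(w))\subseteq L(w)$, so $(c(v),c(w))\in E(H)$.

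The step I expect to be the real obstacle -- and the reason one must split off $g=1$ -- is the coherence of the local labelings across edges. A short propagation argument shows that if adjacent balls are required to be labeled by restrictions agreeing literally on their overlap, then $c$ is forced to arise from a single global function $V(G)\to N$ injective on every radius-$g$ ball, i.e.\ from a proper $N$-coloring of $G^{\le 2g}$; this is exactly what KST supplies when $g\ge 2$, and is off by one color when $g=1$. The resolution is that for $g=1$ the relevant ball is a star, and its large automorphism group lets agreement-up-to-automorphism substitute for agreement on the nose -- which is precisely the freedom exploited in the $g=1$ construction above, and why no appeal to the universal forest $S(T_d)$ or to Bernshteyn's transference is needed, though one could route the argument through $S(T_d)$ instead.
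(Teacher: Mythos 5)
Your argument is essentially the paper's: take a Borel proper $N$-coloring of $G^{\le 2g}$ (via Kechris--Solecki--Todor\v{c}evi\'{c}) and send each vertex to the isomorphism class of the coloring restricted to its radius-$g$ ball. The paper states that $G^{\le 2g}$ is ``$d(d-1)^{2g-1}$-regular,'' which is a slip (that is the number of vertices at distance exactly $2g$, not at most $2g$); the true degree is $|B(o,2g)|-1=\sum_{r=1}^{2g}d(d-1)^{r-1}$, and you correctly observe that this is still $< N=d^{2g}$ when $g\ge 2$ but equals $N$ when $g=1$, so that the KST bound is off by one in that boundary case. Your separate $g=1$ construction (proper $d+1\le d^2$-coloring of $G$ itself, then padding $\rho(N_G(v))$ to a $d$-set $L(v)\subseteq N\setminus\{\rho(v)\}$ by a Borel rule) is a clean fix, and your unwinding of $E(H)$ for stars --- adjacency iff each center label lies in the other's leaf set, with no disjointness requirement because the witnessing $F$ on $B(v_0,1)\cup B(v_1,1)$ need not be injective on the union --- is consistent with the lift description of $E(H)$ and with the way the odd-girth claim is actually used (only the unique $0$-position \emph{within each ball} is tracked). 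That said, the $g=1$ case is vacuous for the theorem, since for $g\le 3$ one may simply take $H=K_{d+1}$; so while you have spotted and repaired a genuine small gap in the paper's bookkeeping, the extra case analysis does not change the substance of the result. In short: correct, same route, with a careful patch at the boundary.
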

\begin{proof}
    Let $G' = G^{\leq 2g}$ denote the Borel graph with $V(G') = V(G)$ and where two vertices are adjacent if their distance in $G$ is at most $2g$. Note that $G'$ is $d(d-1)^{2g-1}$-regular. Since this is strictly less than $N$, it follows from \cite{kst} $G'$ admits a Borel proper $N$-coloring, call it $f$.

    Now let $c : V(G) \to V(H)$ be the function sending a vertex $x$ to the labeling $f \res B(x,g)$. By definition of $G'$ and $f$, this labeling is injective, so it is indeed in $V(H)$. It follows from the definition of $H$ that $c$ is a homomorphism from $G$ to $H$, and it is clearly Borel. 
\end{proof}

This completes the proof of Theorem \ref{thm:easy_girth}. Note that $H$ has many 4-cycles. We do not know if these can be removed. 

\begin{question}\label{q:odd_girth}
    Is There a finite graph $H$ with girth at least 5 with the property that that any Borel 3-regular forest admits a Borel homomorphism to $H$?
\end{question}






\section{Proof of Theorem \ref{thm:hyperfinite_girth}}\label{sec:main}

We mimic the proof from \cite{hom_graph} of the case $l = 0$, $H = K_d$. 
Our counterexample graph $G$ will be the same as theirs, so we start by defining it.
Let $e : \omega \to d$ be any function with each fiber infinite. Let $s_n \in 2^n$ for $n \in \omega$ so that for each $i < d$, the set $\{s_n \mid e(n) = i\}$ is dense in $2^{<\omega}$. Recall the graph $\mathbb{G}_0$ on $2^\omega$ defined by Kechris, Solecki, and Todor\v{c}evi\'{c}. 

\begin{definition}[\cite{kst}]
    $E(\mathbb{G}_0) = \{ (s_n ^\frown \epsilon ^\frown z, s_n ^\frown (1-\epsilon) ^\frown z) \mid n \in \omega, \epsilon \in 2, z \in 2^\omega\}.$
\end{definition}

Following now \cite[Section 4.2]{hom_graph}, we define a $d$-edge labeling of this version of $\mathbb{G}_0$, and pick a subgraph on which the labeling is well-behaved. Observe that if $(x,y) \in E(\mathbb{G}_0)$ then $x$ and $y$ differ in exactly one bit. 

\begin{definition}[\cite{hom_graph}]
\ 
\begin{itemize}
    \item Let $\alpha : E(\mathbb{G}_0) \to d$ be the following (symmetric) edge labeling: Given $(x,y) \in E(\mathbb{G}_0)$, let $n \in \omega$ be the unique $n$ with $x(n) \neq y(n)$. Then $\alpha(x,y) = e(n)$. 
    \item Let $\mathcal{H}$ be the induced subgraph of $\mathbb{G}_0$ on the set 
    \begin{align*} V(\mathcal{H}) 
    := \{x \in 2^\omega \mid \text{every vertex in the } \mathbb{G}_0 \text{ component of } x \\ \text{ is incident to infinitely many edges of every } \alpha \text{ label}.\}
    \end{align*}
\end{itemize}
\end{definition}

\begin{lemma}[Essentially {\cite[Claim 4.7]{hom_graph}}]\label{lem:edge_G0}
\ 
    \begin{enumerate}
        \item $V(\mathcal{H})$ is comeager in $2^\omega$
        \item Let $A \subseteq V(\mathcal{H})$ be Baire measurable and nonmeager. For each $i \in d$ there exist $x,x' \in A$ such that $(x,x')$ is an edge in $\mathcal{H}$ with label $i$. 
    \end{enumerate}
\end{lemma}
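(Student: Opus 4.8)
The plan is to prove the two parts in order, since the first is standard and the second is the substantive one. For part (1), I would show that the complement of $V(\mathcal{H})$ is meager. Fix a vertex $x$ and note that the $\mathbb{G}_0$-component of $x$ is countable, so it suffices to show that for each fixed $i \in d$ the set of $x$ incident to only finitely many edges of label $i$ is meager, and then take a countable union over components (or, more cleanly, use that "$x$ has a component-mate incident to finitely many label-$i$ edges" is a countable union of sets each of which is meager by the same argument after translating along a fixed path word). For a fixed $x$, the label-$i$ edges at $x$ come from coordinates $n$ with $e(n) = i$: flipping bit $n$ of $x$ produces a label-$i$ neighbor, but that neighbor lies in $E(\mathbb{G}_0)$ only when $x$ begins with $s_n$ in the appropriate sense, i.e. $x = s_n{}^\frown \epsilon {}^\frown z$ for the relevant splitting. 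Since $\{s_n \mid e(n) = i\}$ is dense in $2^{<\omega}$, the set of $x$ having \emph{no} label-$i$ edge at all below level $m$ is nowhere dense for every $m$, whence the set with only finitely many label-$i$ edges is meager. A countable union over the (countably many) coordinates needed to reach component-mates finishes part (1); this is essentially the content of \cite[Claim 4.7]{hom_graph}.

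For part (2), let $A \subseteq V(\mathcal{H})$ be Baire measurable and nonmeager, and fix $i \in d$. By the Baire property, there is a basic clopen set $N_t = \{x : t \sqsubseteq x\}$ in which $A$ is comeager, i.e. $N_t \setminus A$ is meager. I want to find an edge of $\mathbb{G}_0$ with label $i$ both of whose endpoints lie in $N_t$; then a localization/genericity argument forces both endpoints (or a comeager set of parallel translates of the pair) into $A$. The key point is the construction of the $s_n$: since $\{s_n \mid e(n) = i\}$ is dense, choose $n$ with $e(n) = i$ and $t \sqsubseteq s_n$ (extend $t$ first if necessary). Then for any $z \in 2^\omega$ the pair $(s_n{}^\frown 0{}^\frown z,\ s_n{}^\frown 1{}^\frown z)$ is an edge of $\mathbb{G}_0$ with $\alpha$-label $e(n) = i$, and both endpoints extend $t$, hence lie in $N_t$. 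Now consider the Borel map $z \mapsto (x_z, x_z') := (s_n{}^\frown 0{}^\frown z,\ s_n{}^\frown 1{}^\frown z)$; this is a homeomorphism onto a clopen (actually just Borel-measurable-image, but really clopen) subset of $N_t \times N_t$, and in particular it pushes meager sets to meager sets and pulls them back likewise. The set of $z$ with $x_z \notin A$ is meager (being the preimage of the meager set $N_t \setminus A$ under a homeomorphism onto a subset of $N_t$), and similarly the set of $z$ with $x_z' \notin A$ is meager; also the set of $z$ with $x_z \notin V(\mathcal H)$ or $x_z'\notin V(\mathcal H)$ is meager by part (1). Since $2^\omega$ is a Baire space, there is a $z$ avoiding all three meager sets, giving $x_z, x_z' \in A \subseteq V(\mathcal{H})$ with $(x_z, x_z')$ an edge of $\mathcal{H}$ of label $i$, as desired.

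The main obstacle is making precise the interaction between the coordinates that flip to give label-$i$ neighbors and the combinatorial density of $\{s_n : e(n) = i\}$ — in particular being careful that in part (1) one must close under passing to \emph{all} component-mates, not just the vertex $x$ itself, which is why the component of $x$ being countable (and reachable by countably many fixed "path words") is used. In part (2) one must also confirm that the map $z \mapsto (x_z,x_z')$ genuinely identifies $2^\omega$ with a set on which the Baire category of $A$ inside $N_t$ is faithfully reflected; this is immediate once one observes it is a homeomorphism of $2^\omega$ onto the clopen set $N_{s_n{}^\frown 0} \subseteq N_t$ composed with the bit-$|s_n|$-flip, both of which are homeomorphisms preserving category. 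I would also remark that this is exactly \cite[Claim 4.7]{hom_graph} up to the bookkeeping of edge labels, so the write-up can be kept short.
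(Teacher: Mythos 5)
The paper does not actually include a proof of this lemma; it simply cites \cite[Claim 4.7]{hom_graph}, so there is no internal proof to compare your write-up against. Your proposal is a correct, self-contained proof of both parts, and it uses the standard localization argument one expects for $\mathbb{G}_0$-style statements: Baire property to localize $A$ inside a basic clopen $N_t$, density of $\{s_n : e(n)=i\}$ to place some $s_n$ above $t$, and the observation that $z \mapsto s_n{}^\frown\epsilon{}^\frown z$ is a homeomorphism onto a clopen subset of $N_t$, so comeagerness of $A$ pulls back. Your treatment of the ``all component-mates'' quantifier in part (1) via countably many partial edge-move homeomorphisms indexed by path words is the right mechanism, since the domain of each such composite move is clopen and homeomorphisms preserve category.

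One small wording issue in part (1): you write that ``the set of $x$ having no label-$i$ edge at all \emph{below} level $m$ is nowhere dense.'' What you need (and clearly intend, given the conclusion you draw) is that the set of $x$ with no label-$i$ edge coming from a coordinate $n \ge m$ is nowhere dense, since it is the complement of the open dense set $\bigcup_{n \ge m,\, e(n)=i} [s_n]$; unioning these over $m$ covers the set of $x$ with only finitely many label-$i$ edges. Also, the third meager set you exclude in part (2) (the $z$ with $x_z$ or $x_z'$ outside $V(\mathcal{H})$) is redundant, since $A \subseteq V(\mathcal{H})$ already forces $x_z, x_z' \in V(\mathcal{H})$ once they land in $A$; harmless, but worth trimming.
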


We can now define our counterexample $G$. Recall that $S_d = \{a_0,\ldots,a_{d-1}\}$ is our generating set for $\Z_2^{*d}$.

\begin{definition}[\cite{hom_graph}]
Let $G$ be the graph $\textnormal{Hom}^e(T_d,\mathcal{H})$, defined as follows: 
$V(G) \subseteq (2^\omega)^{\Z_2^{*d}}$ is the set of graph homomorphisms $z: \textnormal{Cay}(\Z_2^{*d},S_d) \to \mathcal{H}$ that preserve the edge labels. That is, so that for each $\gamma \in \Z_2^{*d}$ and $i \in d$, $\alpha(z(a_i),z(\gamma a_i)) = i$. 

Let $G$ be the induced subgraph of $S(\Z_2^{*d},S_d,2^\omega)$ on $V(G)$. 
\end{definition}

It is not hard to see that $V(G)$ is an invariant subset of $F(\Z_2^{*d},2^\omega)$, so the final operation here is well defined, and $G$ is acyclic and $d$-regular. It also turns out that $G$ is hyperfinite since $\mathcal{H}$ is \cite[Proposition 2.6]{hom_graph}. 

We now describe a method of constructing elements of $G$ that yields a natural bijection between $V(G)$ and $V(\mathcal{H}) \times \omega^{\Z_2^{*d} \setminus \{1\}}$. First, let $x \in V(\mathcal{H})$ and $i \in d$. Let $n_0^i < n_1^i < \cdots$ enumerate the elements $n$ of $e\inv(i)$ for which $s_{n} \subseteq x$. Then for each $m$, $x$ is adjacent to the real obtained by flipping the $n_m^i$-th bit of $x$ and the edge to this vertex has color $i$. Furthermore this is enumerates all such edges. Let $f_{i,m}(x)$ denote the neighbor corresponding to $m \in \omega$. We will need the following later.

\begin{claim}\label{c:continuous_inv}
    Each $f_{i,m}$ is a continuous involution of $V(\mathcal{H})$. 
\end{claim}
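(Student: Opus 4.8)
The plan is to verify directly that each $f_{i,m}$ is (a) well-defined as a self-map of $V(\mathcal{H})$, (b) an involution, and (c) continuous. For well-definedness, recall that $f_{i,m}(x)$ is obtained from $x$ by flipping the $n_m^i$-th bit, where $n_m^i$ is the $(m+1)$-st element $n$ of $e^{-1}(i)$ with $s_n \subseteq x$. First I would observe that, since $x \in V(\mathcal{H})$, $x$ is incident to infinitely many edges of label $i$, so infinitely many such $n$ exist and $n_m^i$ is defined for every $m$; and since $(x, f_{i,m}(x))$ is an edge of $\mathbb{G}_0$ with label $i$ joining $x$ to a vertex in its $\mathbb{G}_0$-component, the target $f_{i,m}(x)$ lies in the same component as $x$, hence in $V(\mathcal{H})$.

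Next, for the involution property, I would check that flipping the $n_m^i$-th bit of $y := f_{i,m}(x)$ returns $x$. The key point is that $x$ and $y$ agree on the first $n_m^i$ coordinates — in fact they agree everywhere except at coordinate $n_m^i$ — so for every $n < n_m^i$ we have $s_n \subseteq x \iff s_n \subseteq y$, and moreover whether $s_{n_m^i} \subseteq x$ or $s_{n_m^i} \subseteq y$ depends only on the first $n_m^i$ bits (since $s_{n_m^i} \in 2^{n_m^i}$), so $s_{n_m^i} \subseteq x \iff s_{n_m^i} \subseteq y$. Therefore the enumeration $n_0^i < n_1^i < \cdots$ computed from $y$ agrees with the one computed from $x$ at least up through index $m$, so $n_m^i$ (computed from $y$) equals $n_m^i$ (computed from $x$), and flipping that bit of $y$ recovers $x$. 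Hence $f_{i,m} \circ f_{i,m} = \mathrm{id}$.

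For continuity, fix $x \in V(\mathcal{H})$ and let $k = n_m^i$, the bit to be flipped. This value depends only on $x \restriction (k+1)$ (which of the finitely many $n \le k$ in $e^{-1}(i)$ have $s_n \subseteq x$), so on the basic clopen neighborhood $U = \{x' : x' \restriction (k+1) = x \restriction (k+1)\}$ the map $f_{i,m}$ acts by flipping the same fixed coordinate $k$; flipping a fixed coordinate is a homeomorphism of $2^\omega$, so $f_{i,m}$ is continuous at $x$. Since $x$ was arbitrary, $f_{i,m}$ is continuous on $V(\mathcal{H})$. Combined with the involution property this gives that $f_{i,m}$ is a continuous involution, completing the proof.

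I do not expect any serious obstacle here; the only point requiring care is the bookkeeping in the involution step — making sure that the enumeration $(n_j^i)_j$ is unchanged (up to the relevant index) when recomputed from $f_{i,m}(x)$ rather than $x$ — which follows from the observation that $x$ and $f_{i,m}(x)$ differ in exactly one coordinate, namely $n_m^i$, and that membership $s_n \subseteq \cdot$ for $n \le n_m^i$ is determined by the first $n_m^i$ coordinates.
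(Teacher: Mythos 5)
Your proof is correct and follows essentially the same approach as the paper: the key observation in both is that $x$ and $y = f_{i,m}(x)$ agree on their first $n_m^i$ bits, so the enumeration of candidate indices (and hence the flipped position) is the same when computed from $x$, from $y$, or from any $z$ agreeing with $x$ on that initial segment, yielding both the involution and continuity. Your additional remark verifying that $f_{i,m}$ maps $V(\mathcal{H})$ into itself is a reasonable extra detail the paper leaves implicit.
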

\begin{proof}
    Let $y = f_{i,m}(x)$. $x \res n_m^i = y \res n_m^i$, so for $n \leq n_m^i$ they agree on which $s_n$'s they have as prefixes. Thus $f_{i,m}(y) = x$. Furthermore, any $z$ with $z \res n_m^i = x \res n_m^i$ will for the same reason agree on which bit is flipped by $f_{i,m}$, proving continuity. 
\end{proof}

Now again let $x \in V(\mathcal{H})$, and let $y \in \omega^{\Z_2^{*d} \setminus \{1\}}$. We will use $y$ to define an element $z \in V(G)$ with $z(1) = x$. We define $z(\gamma)$ by induction on the word length of $\gamma$ in terms of the generating set $S_d$. Given $\gamma \neq 1$, write $\gamma = \delta a_i$ where $\delta \in \Z_2^{*d}$ is a shorter word, and define
$z(\gamma) = f_{i,y(\gamma)}(z(\delta))$. 
Write $z = \Phi(x,y)$. It is easy to see that $\Phi$ is a Borel bijection from $V(\mathcal{H}) \times \omega^{\Z_2^{*d} \setminus \{1\}}$ to $V(G)$. 

\begin{figure}
    \centering

\begin{tikzpicture}[
  every node/.style={circle, draw, fill=white, inner sep=2pt, minimum size=22pt},
  scale=1
]

\node (r) at (0,0) {1};

\foreach \i/\angle in {0/90,1/210,2/330} {
  \node (a\i) at (\angle:1.5) {$a_\i$};
  \draw (r) -- node[
      midway,
      shift=(\angle-90:0.2),
      draw=none,
      fill=none,
      inner sep=0pt
    ] {$\i$} (a\i);
}

\foreach \i/\angle in {0/90,1/210,2/330} {
  \pgfmathtruncatemacro{\label}{mod(\i+1,3)}
  \pgfmathtruncatemacro{\labell}{mod(\i+2,3)}
  \node[fill = blue!30] (b\i1) at ({\angle+20}:3) {\scriptsize{$a_\i a_\label$}};
  \node[fill = blue!30] (b\i2) at ({\angle-20}:3) {\scriptsize{$a_\i a_\labell$}};
  \draw (a\i) -- node[
      midway,
      shift=(\angle+110:0.2),
      draw=none,
      fill=none,
      inner sep=0pt
    ] {$\label$} (b\i1);
  \draw (a\i) -- node[
      midway,
      shift=(\angle+70:0.2),
      draw=none,
      fill=none,
      inner sep=0pt
    ] {$\labell$} (b\i2);
}

\node[fill=red!30] at (b21) {\scriptsize{$a_2a_0$}};

\end{tikzpicture}
    \caption{The 2-ball around the identity in $\text{Cay}(\Z_2^{*3},S_3)$ with our edge labels. In the games $G(x,a_2a_0)$, Alice is responsible for labeling the red node (and all the other nodes in that sextant), Bob for the blue nodes (and all the other nodes in those five sextants), and the white nodes other than the identity are given the label 0. }
    \label{fig:game}
\end{figure}
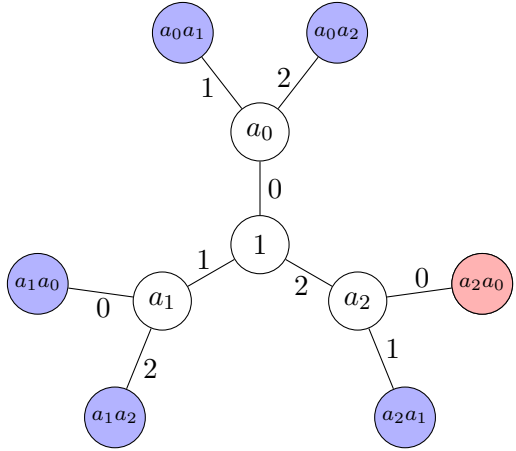

It is time to show that there is no Borel homomorphism from $G$ to $H$. Suppose $c$ is one. For $r \in \N$, $S_d^{\leq r}$ denotes the set of elements of $\Z_2^{*d}$ with word length at most $r$. I.e, those in the radius $r$-ball around in $\textnormal{Cay}(\Z_2^{*d},S_d)$. Let $T = S_d^{\leq l+1} \setminus S_d^{\leq l}$, the set of elements with word length exactly $l+1$. Note that $|T| = d \cdot (d-1)^l$. Therefore, by adding isolated vertices if necessary, we may identify $V(H)$ with $T$. 

For each $x \in V(\mathcal{H})$ and $\tau \in T$, we define a game, call it $G(x,\tau)$, in which two players, Alice and Bob, define one by one the entries in an element $y \in \omega^{\Z_2^{*d} \setminus \{1\}}$. We set $y(\gamma) = 0$ for $\gamma \in S_d^{\leq l} \setminus \{1\}$. Then Alice chooses the value of $y(\gamma)$ on all words $\gamma$ which begin with $\tau$ and Bob chooses the value of $y$ on all other elements. The order of play is that for each $k > l$ in turn, Alice first chooses a value for words of length $k$ beginning with $\tau$, then Bob chooses a value for all other words of length $k$. This is illustrated in Figure \ref{fig:game}. 
In the end we obtain a vertex $\Phi(x,y) \in V(G)$. Bob wins if and only if $c(\Phi(x,y)) = \tau$. 

\begin{claim}
    For each $x \in V(\mathcal{H})$, there is some $\tau \in T$ for which Alice does not have a winning strategy in $G(x,\tau)$.
\end{claim}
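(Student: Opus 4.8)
The plan is to argue by contradiction, using the strategy-coupling trick of Marks as carried out in \cite{hom_graph}. Fix $x \in V(\mathcal{H})$ and suppose, toward a contradiction, that Alice has a winning strategy $\sigma_\tau$ in $G(x,\tau)$ for \emph{every} $\tau \in T$. I will assemble from the $\sigma_\tau$'s a single vertex $z \in V(G)$ whose image $c(z)$ lies outside $T$. Since $|V(H)| \le d\cdot(d-1)^l = |T|$ and we have identified $V(H)$ with $T$, the map $c$ is in particular a total function $V(G) \to T$, so this is a contradiction. Note that no appeal to Borel determinacy is needed for this claim; determinacy will enter only afterwards, to pass from ``Alice has no winning strategy in $G(x,\tau)$'' to ``Bob has one''.

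The key observation is that the arenas of the $|T|$ games overlap in a completely controlled way: every $\gamma \in \Z_2^{*d}$ with word length $\ge l+1$ lies in the subtree hanging off a \emph{unique} element $\tau_\gamma \in T$, namely the length-$(l+1)$ prefix of the reduced word for $\gamma$; and the coordinate $y(\gamma)$ is Alice's to choose in $G(x,\tau_\gamma)$ but Bob's in $G(x,\tau')$ for every $\tau' \ne \tau_\gamma$. I would use this to define one $y \in \omega^{\Z_2^{*d}\setminus\{1\}}$ by recursion on word length: set $y(\gamma) = 0$ for $1 \le |\gamma| \le l$, and for $k = l+1, l+2, \dots$ and each $\gamma$ with $|\gamma| = k$, let $y(\gamma)$ be the value $\sigma_{\tau_\gamma}$ prescribes for the length-$k$ word $\gamma$ when fed the partial configuration already built on words of length $< k$. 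This is well defined because in each $G(x,\tau)$ Alice moves before Bob at every level, so $\sigma_\tau$'s level-$k$ move depends only on levels $< k$; hence there is no circular dependency.

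Next I would check that, for each individual $\tau \in T$, this same $y$ is the outcome of a legal run of $G(x,\tau)$ in which Alice follows $\sigma_\tau$. Alice's moves are $\sigma_\tau$'s by construction; Bob's level-$k$ move on the subtree hanging off $\tau' \ne \tau$ is the move $\sigma_{\tau'}$ produced at the same stage, and this is a \emph{legal} Bob move, since Bob acts after Alice at level $k$ and is free to ignore her level-$k$ choice. Since $\sigma_\tau$ is winning for Alice, Bob loses this run, i.e. $c(\Phi(x,y)) \ne \tau$. As $\tau$ was arbitrary in $T$, we get $c(\Phi(x,y)) \notin T$; but $\Phi(x,y) \in V(G)$ by the bijectivity of $\Phi$ recalled above, so $c(\Phi(x,y)) \in T$ — contradiction.

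The only real work here is the bookkeeping, and that is where I expect the main (modest) obstacle to lie: verifying that the coupled play is internally consistent, which is handled by building $y$ one level at a time and the fact that Alice moves first at each level in every game; and verifying that the induced Bob-play is legal in each game, which is handled by Bob always responding after Alice. Everything else — that the assembled $y$ genuinely lies in $\omega^{\Z_2^{*d}\setminus\{1\}}$ (each $\sigma_\tau$ outputs values in $\omega$) and that $\Phi(x,\cdot)$ maps into $V(G)$ — is immediate from the setup.
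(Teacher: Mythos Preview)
Your proof is correct and is exactly the paper's approach---play all of Alice's putative winning strategies against one another to manufacture a single $y$ with $c(\Phi(x,y)) \notin T$---carried out in full detail where the paper gives a one-sentence sketch. Your careful verification that the coupled play is well-defined (Alice's level-$k$ move depends only on levels $<k$) and that the induced Bob-play is legal in each game (Bob, moving second, may ignore Alice's current-level move) fills in precisely the bookkeeping the paper leaves implicit.
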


\begin{proof}
    If not, then by playing the winning Alice strategies in $G(x,\tau)$ for all $\tau \in T$ against eachother, we obtain a $y$ for which $c(\Phi(x,y)) \neq \tau$ for all $\tau \in T$. But $T$ is the codomain of $c$. 
\end{proof}

Since $c$ and $\Phi$ are Borel, the Borel determinacy theorem implies that for each $x$, there is some $\tau \in T$ for which Bob has a winning strategy in $G(x,\tau)$. Let $d(x) \in T$ be the lex-least such $\tau$. 
$d$ may not be Borel, but it is measurable with respect to the $\sigma$-algebra of so-called Game-Borel sets. In particular it is Baire measurable. See \cite[Proposition 1.3]{hom_graph}.

Now choose $\tau \in T$ for which $d\inv(\tau)$ is nonmeager. Write $\tau = a_{i_0} \cdots a_{i_l}$, and let $\sigma = a_{i_0} \cdots a_{i_l} \cdots a_{i_0} = \tau a_{i_l} \tau\inv$. 

\begin{claim}
    There exist $x,x' \in d\inv(\tau)$ so that, letting $y = f_{i_{l-1},0}( \cdots f_{i_0,0}(x) \cdots)$ and likewise for $y'$ and $x'$, $(y,y') \in E(\mathcal{H})$ and the edge between them has label $i_l$. 
\end{claim}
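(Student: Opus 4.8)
The plan is to transport the nonmeager set $d^{-1}(\tau)$ through the relevant composition of the coordinate-flip maps and then invoke Lemma~\ref{lem:edge_G0}(2). First I would set
\[ g := f_{i_{l-1},0} \circ f_{i_{l-2},0}\circ \cdots \circ f_{i_0,0}, \]
so that the quantity $y$ in the statement is exactly $g(x)$, and likewise $y' = g(x')$; when $l=0$ this is the empty composition, $g = \mathrm{id}_{V(\mathcal{H})}$. By Claim~\ref{c:continuous_inv} each $f_{i,m}$ is a continuous involution of $V(\mathcal{H})$, hence a homeomorphism of $V(\mathcal{H})$ onto itself, so $g$ is one too. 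I would then put $A := g\big(d^{-1}(\tau)\big) \subseteq V(\mathcal{H})$; since $A = \{\, g(x) : x \in d^{-1}(\tau)\,\}$, it suffices to find $y,y' \in A$ with $(y,y') \in E(\mathcal{H})$ and $\alpha(y,y') = i_l$.

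To feed $A$ into Lemma~\ref{lem:edge_G0}(2) I need it to be Baire measurable and nonmeager. Now $d^{-1}(\tau)$ has the Baire property because $d$ is Baire measurable, and it is nonmeager by the choice of $\tau$. A homeomorphism of $V(\mathcal{H})$ carries nowhere dense sets to nowhere dense sets and Baire-property sets to Baire-property sets, so $A$ has the Baire property and is nonmeager in $V(\mathcal{H})$; and because $V(\mathcal{H})$ is comeager in $2^\omega$ by Lemma~\ref{lem:edge_G0}(1), a subset of $V(\mathcal{H})$ is nonmeager in $V(\mathcal{H})$ precisely when it is nonmeager in $2^\omega$. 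Thus $A$ is of the kind Lemma~\ref{lem:edge_G0}(2) accepts.

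Finally, applying Lemma~\ref{lem:edge_G0}(2) to $A$ and the index $i_l \in d$ produces $y,y' \in A$ with $(y,y') \in E(\mathcal{H})$ and $\alpha(y,y') = i_l$; choosing $x,x' \in d^{-1}(\tau)$ with $g(x)=y$ and $g(x')=y'$ then gives the claim. I do not anticipate a genuine obstacle: the one point that needs a little care is that the maps $f_{i,m}$, and hence $g$, respect Baire category, but this is immediate once one knows (from Claim~\ref{c:continuous_inv}) that each $f_{i,m}$ is a homeomorphism of $V(\mathcal{H})$. The real content of the claim is simply that the combinatorial input Lemma~\ref{lem:edge_G0}(2) can be applied to the transported set $A = g(d^{-1}(\tau))$.
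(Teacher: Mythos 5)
Your proposal is correct and follows exactly the paper's argument: define $g = f_{i_{l-1},0}\circ\cdots\circ f_{i_0,0}$, note via Claim~\ref{c:continuous_inv} that $g$ is a homeomorphism of $V(\mathcal{H})$, conclude $g(d^{-1}(\tau))$ is Baire measurable and nonmeager, and apply Lemma~\ref{lem:edge_G0}(2). The extra remarks you include (Baire measurability of $d^{-1}(\tau)$, comeagerness of $V(\mathcal{H})$) are just making explicit what the paper leaves implicit.
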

\begin{proof}
    Let $g = f_{i_{l-1},0} \circ \cdots \circ f_{i_0,0}$. By Claim \ref{c:continuous_inv} $g$ is a homeomorphism of $V(\mathcal{H})$.  
    Thus $g(d\inv(\tau))$ is Baire measurable and nonmeager, so we can apply Lemma \ref{lem:edge_G0}. 
\end{proof}

\begin{figure}
    \centering

\begin{tikzpicture}[
  every node/.style={circle, draw, fill=white, inner sep=2pt, minimum size=22pt},
  scale=1
]

\node (r) at (0,0) {1};

\foreach \i/\angle in {0/90,1/210,2/330} {
  \node (a\i) at (\angle:1.5) {};
  \draw (r) -- node[
      midway,
      shift=(\angle-90:0.2),
      draw=none,
      fill=none,
      inner sep=0pt
    ] {$\i$} (a\i);
}

\foreach \i/\angle in {0/90,1/210,2/330} {
  \pgfmathtruncatemacro{\label}{mod(\i+1,3)}
  \pgfmathtruncatemacro{\labell}{mod(\i+2,3)}

  \ifnum\i=2
  \else
  \node[fill = blue!30] (b\i1) at ({\angle+20}:3) {};
    \draw (a\i) -- node[
      midway,
      shift=(\angle+110:0.2),
      draw=none,
      fill=none,
      inner sep=0pt
    ] {$\label$} (b\i1);
  \fi
  
  \node[fill = blue!30] (b\i2) at ({\angle-20}:3) {};

  \draw (a\i) -- node[
      midway,
      shift=(\angle+70:0.2),
      draw=none,
      fill=none,
      inner sep=0pt
    ] {$\labell$} (b\i2);
}

\begin{scope}[shift = {(6,0)}]
\node (2r) at (0,0) {$\sigma$};

\foreach \i/\angle in {0/90,1/330,2/210} {
  \node (2a\i) at (\angle:1.5) {};
  \draw (2r) -- node[
      midway,
      shift=(\angle+90:0.2),
      draw=none,
      fill=none,
      inner sep=0pt
    ] {$\i$} (2a\i);
}

\foreach \i/\angle in {0/90,1/330,2/210} {
  \pgfmathtruncatemacro{\label}{mod(\i+1,3)}
  \pgfmathtruncatemacro{\labell}{mod(\i+2,3)}
  \ifnum\i=2
  \else
  \node[fill = teal!30] (2b\i1) at ({\angle-20}:3) {};
    \draw (2a\i) -- node[
      midway,
      shift=(\angle-110:0.2),
      draw=none,
      fill=none,
      inner sep=0pt
    ] {$\label$} (2b\i1);
  \fi
  \node[fill = teal!30] (2b\i2) at ({\angle+20}:3) {};

  \draw (2a\i) -- node[
      midway,
      shift=(\angle-70:0.2),
      draw=none,
      fill=none,
      inner sep=0pt
    ] {$\labell$} (2b\i2);
}
\end{scope}

\draw (a2) -- node[midway, yshift=6pt, xshift=-10pt, fill = none, draw=none] {0} (2a2);
\node at (a2) {};
\node at (2a2) {$\tau$};
\draw[line width=2pt, dotted] (3,3.5) -- (3,-3);

\end{tikzpicture}
    \caption{Playing two copies of Bob against eachother. As in Figure \ref{fig:game}, $d = 3$, $l = 1$, and $\tau = a_2a_0$. 
    The two Bobs agree on edge labels, but
    vertices are labeled according to the unprimed Bob's perspective. 
    The unprimed Bob is responsible for labeling the blue vertices. 
    From his perspective, everything to the right of the dotted line is being labeled by Alice. In reality we label $\tau$ with $m$ and the remaining nonidentity white nodes 0, while the primed Bob labels the green vertices.
    The fact that each $f_{i,j}$ is an involution can be used to see that the primed Bob agrees with the unprimed Bob, modulo shifting by $\sigma$, about which element of $V(G)$ is being constructed. }
    \label{fig:two_bob}
\end{figure}
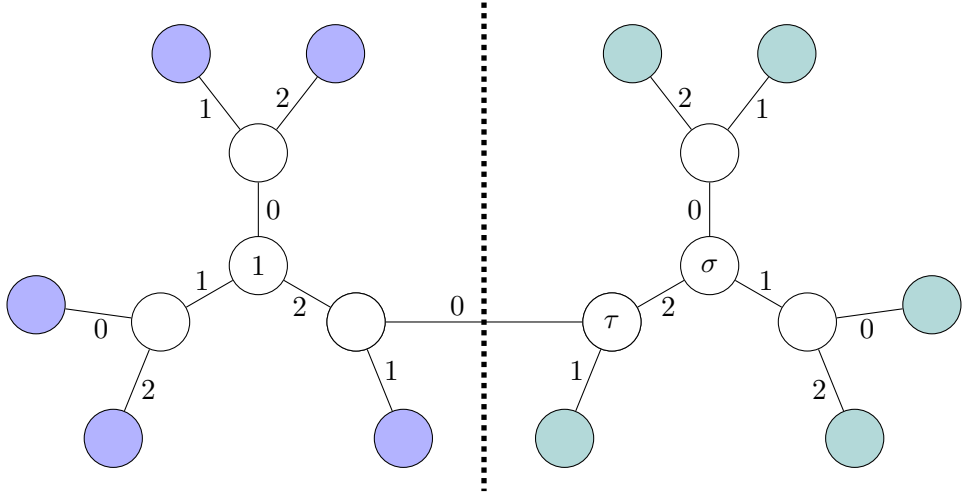

Fix such $x,x',y,y'$ and let $m \in \omega$ such that $f_{i_l,m}(y) = y'$. Let $B$ be a winning strategy for Bob in $G(x,\tau)$ and likewise for $B'$ and $x'$. We can now play these strategies against each other as explained in Figure \ref{fig:two_bob}. Let $z,z' \in V(G)$ be the vertices produced from $B$ and $B'$'s perspective respectively. Then we have $c(z)=c(z') = \tau$ and $\sigma \cdot z = z'$. For $ j \leq 2l+1$ let $\sigma_j \in \Z_2^{*d}$ be the word consisting of the $j$ rightmost characters in $\sigma$, so that $\langle \sigma_j z \mid j \leq 2l+1 \rangle$ is the path in $G$ from $z$ to $z'$. Then $\langle c(\sigma_j z) \mid j \leq 2j+1 \rangle$ is a circuit in $H$ of length $2l+1$, (it starts and ends at $\tau$) contradicting the odd girth assumption on $H$. This completes the proof of Theorem \ref{thm:hyperfinite_girth}. 

\section{Further questions}

We end the paper with two more questions. In the previous section we showed that for $k$ sufficiently large and $\epsilon$ sufficiently small, a hyperfinite Borel 3-regular forest may fail to admit a Borel homomorphism to the Schrijver graph $K'((2+\epsilon)k,k)$. 
Whether this holds for the full Kneser graph $K((2+\epsilon)k,k)$ is unknown. 

\begin{question}
    Let $G$ be a hyperfinite Borel  3-regular forest. Is $\chi_B^*(G) = 2$?
\end{question}

It follows from the remarks at the end of the introduction that the answer is yes if one ignores null sets. 

Finally, we would like to advertise the following test question which does not involve hyperfiniteness. For the most part in this paper we have been interested in the Kneser graphs $K(n,k)$ asymptotically in $n$ and $k$, but our knowledge about particular small values is limited. 

\begin{question}
    Let $G$ be a Borel 3-regular forest. Does $G$ admit a Borel homomorphism to $K(6,2)$?
\end{question}

The answer is again yes if one ignores null sets since any such $G$ has measurable 3-colorings by \cite{conley2016brooks}. 
Recall also that by Proposition \ref{prop:meas_ind_eq_Td} any such $G$ has a Borel homomorphism to $K(3k,k)$ for $k$ large enough. Furthermore, the answer is negative for $K(5,2)$ since $\chi(K(5,2)) = 3$ by Theorem \ref{thm:Lovasz} while $\chi_B(G)$ may be 4 \cite{Marks}, and the answer is positive for $K(7,2)$ as we quickly sketch below.

\begin{prop}
    Let $d \geq 2$, $k \in \N$, and $G$ a $d$-regular Borel forest. $G$ admits a Borel homomorphism to $K(dk+1,k)$. 
\end{prop}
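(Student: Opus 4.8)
The plan is to reduce first to the universal $d$-regular forest and then to the measurable independence ratio machinery. A Borel homomorphism $G\to K(dk+1,k)$ is exactly a Borel assignment to each vertex of a $k$-element subset of $\{0,\dots,dk\}$ with adjacent vertices receiving disjoint sets, equivalently a list of $dk+1$ Borel independent sets in $G$ covering each vertex exactly $k$ times. Since $S(T_d)$ is universal for Borel $d$-regular forests, it is enough to produce such a homomorphism for $G=S(T_d)$.

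For $d=2$ the statement already follows from the characterization of the finite graphs admitting Borel homomorphisms from every $2$-regular acyclic Borel graph, since $K(2k+1,k)$ has odd girth $2k+1$ and hence contains an odd cycle; so assume $d\ge 3$. I would next check that $\alpha_\mu(S(T_d))>\tfrac{k}{dk+1}$ for the usual measure $\mu$. Indeed $\tfrac{k}{dk+1}<\tfrac1d$, and $\alpha_\mu(S(T_d))\ge\tfrac1d$ for all $d\ge 3$: for $d=3$ this is immediate from $\alpha_\mu(S(T_3))\ge 0.4361$ (Theorem~\ref{thm:fiid_ind_numbers}), and for larger $d$ it follows from the standard lower bounds $\alpha_\mu(S(T_d))=\Omega(\tfrac{\log d}{d})$ on the fiid independence ratio of $T_d$, with the few remaining small cases handled directly. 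The strict inequality is the point: we have density to spare.

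The last step — which I expect to be the main obstacle — is to turn this into a Borel homomorphism to $K(dk+1,k)$ on the nose. Proposition~\ref{prop:meas_ind_eq_Td} only gives the value $\chi^*_B(S(T_d))=1/\alpha_\mu(S(T_d))$, and since $\chi^*_B$ is an infimum it need not be witnessed by this particular Kneser graph: from the bound alone one only gets a Borel homomorphism to some $K(n,m)$ with $n/m$ close to $1/\alpha_\mu(S(T_d))$, which need not admit a homomorphism to $K(dk+1,k)$. I would follow Bernshteyn's proof of Theorem~\ref{thm:meas_ind_eq}, in which a $\mu$-measurable independent set of near-optimal density is ``spread out'' over the Bernoulli shift by a Baire-category argument exploiting genericity of the shift action, yielding a family of Borel independent sets; the slack coming from the strict inequality $\alpha_\mu(S(T_d))>\tfrac{k}{dk+1}$ is precisely what should let one arrange this family to consist of exactly $dk+1$ classes covering every vertex with multiplicity exactly $k$, rather than only asymptotically. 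Carrying out this ``tile exactly, with room'' refinement of the spreading-out argument is the bulk of the proof.
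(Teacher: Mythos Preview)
Your approach is genuinely different from the paper's and leaves a real gap at the final step.

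The paper's proof is elementary and avoids the fiid/Bernshteyn machinery entirely. It iteratively removes Borel maximal independent sets (\`a la \cite{kst}) to build a Borel partial proper $d$-coloring $c:V(G)\rightharpoonup d$ whose uncolored part $X$ induces a $2$-regular subgraph, arranged so that every colored neighbor of $X$ has color $<d-2$. Vertices in $c^{-1}(i)$ then receive the block $\{ik,\dots,ik+k-1\}$, and the $2$-regular remainder $G\res X$ is handled with the leftover $2k+1$ colors $\ge (d-2)k$: since $K(2k+1,k)$ contains an odd cycle, this is possible by the $d=2$ characterization you already cite. No reduction to $S(T_d)$, no independence ratios, no appeal to Bernshteyn.

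Your route runs into exactly the obstacle you flag but do not resolve: from $\chi^*_B(S(T_d))<(dk+1)/k$ one only gets a Borel homomorphism to \emph{some} $K(n,m)$ with $n/m$ near the infimum, and there is no general homomorphism $K(n,m)\to K(n',m')$ from $n/m\le n'/m'$ alone. You propose a ``hit $(dk+1,k)$ exactly using the slack'' refinement of Bernshteyn's spreading-out argument but do not carry it out; as you yourself say, this would be the bulk of the proof, and it is not a routine modification. (Incidentally, the cleanest justification of $\alpha_\mu(S(T_d))\ge 1/d$ for $d\ge 3$ is the measurable Brooks theorem of \cite{conley2016brooks}, which gives a $\mu$-measurable proper $d$-coloring and hence a color class of density $\ge 1/d$; the asymptotic-plus-small-cases route is unnecessarily indirect.) As written this is a program rather than a proof, and even if completed it would be substantially heavier than the paper's two-paragraph construction.
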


That is, $G$ admits a Borel $k$-fold $(dk+1)$-coloring.
Note that in the case $k = 1$ this is just the fact that $\chi_B(G) \leq d+1$ \cite{kst}. 

\begin{proof}
    Our approach is based on the ``$d$-coloring off lines'' problem from \cite{BCGGRV}. First note that we can find a Borel partial proper $d$-coloring $c : V(G) \rightharpoonup d$ with the property that, letting $X = V(G) \setminus \dom(c)$, $G \res X$ is 2-regular, and for all $y \in \dom(c)$ adjacent to a point in $X$, $c(y) < d-2$. To do this, by \cite{kst} iteratively remove Borel maximal independent sets from $V(G)$ and have these be the $c\inv(i)$ for $i < d-2$. Letting $Y$ be the remaining set of vertices, note $G \res Y$ has maximum degree 2. The union of the connected components of $G \res Y$ which are not 2-regular can then easily be Borel 2-colored using $d-1$ and $d-2$. 

    We now describe our $k$-fold $(dk+1)$-coloring. First, for each $i < d$, Each point $x \in c\inv(i)$ will simply get the colors $ik, ik+1, \ldots, ik+(k-1)$. 

    Each point in $X$ now has at least the colors greater than equal to $(d-2)k$ available. Thus, it suffices to give a Borel $k$-fold $(2k+1)$-coloring of $G \res X$. Since $K(2k+1,k)$ has an odd cycle by Lemma \ref{lem:kneser_odd_girth}, this is possible \cite{GR_paths}.
\end{proof}

\section*{Acknowledgments}

The author is supported by the NSF under award number DMS-2402064.

\bibliographystyle{amsalpha}
\bibliography{main}

\end{document}